\pgfplotsset{compat=1.12}
\newtheorem{thm}{Theorem}[section]
\newtheorem{lem}[thm]{Lemma}        
\newtheorem{cor}[thm]{Corollary}
\newtheorem{deff}[thm]{Definition}
\newcommand{\gnm}{\mathcal{G}_{n,m}}
\begin{document}

\title{\bf On maximum graphs in Tutte polynomial posets}

\date{}      
\maketitle

\vspace{-40pt}

\noindent
\[\text{Nathan Kahl}\,^\mathrm{1}, \text{Kristi Luttrell}\,^\mathrm{1}\]

\bigskip
\noindent

{\footnotesize $^{\mathrm{1}}$\,\textit{Department of Mathematics and
    Computer Science, Seton Hall University, South Orange, NJ 07079,
    USA}}\\[0.5mm]
{\footnotesize Email: 
  \url{kahlnath@shu.edu}, \url{kristi.luttrell@shu.edu}}

\bigskip
\begin{abstract}
  \noindent
Boesch, Li, and Suffel were the first to identify the existence of uniformly optimally reliable graphs (UOR graphs), graphs which maximize all-terminal reliability over all graphs with $n$ vertices and $m$ edges.  The all-terminal reliability of a graph, and more generally a graph's all-terminal reliability polynomial $R(G;p)$, may both be obtained via the Tutte polynomial $T(G;x,y)$ of the graph $G$.  Here we show that the UOR graphs found earlier are in fact maximum graphs for the Tutte polynomial itself, in the sense that they are maximum not just for all-terminal reliability but for a vast array of other parameters and polynomials that may be obtained from $T(G;x,y)$ as well.  These parameters include, but are not limited to, enumerations of a wide variety of well-known orientations, partial orientations, and fourientations of $G$; the magnitudes of the coefficients of the chromatic and flow polynomials of $G$; and a wide variety of generating functions, such as generating functions enumerating spanning forests and spanning connected subgraphs of $G$.  The maximality of all of these parameters is done in a unified way through the use of $(n,m)$ Tutte polynomial posets.
\end{abstract}

\section{Introduction}
\label{sec1}

The \textit{all-terminal reliability} of a connected graph $G$, denoted $R(G;p)$, is the probability that $G$ remains connected when edges fail independently with probability $p$.  It is well-known that $R(G;p)$ can be written as a polynomial in terms of $p$ whose coefficients carry important combinatorial information about $G$, see for example \cite{c87}.
Let $\gnm$ denote the class of connected  graphs with $n$ vertices and $m$ edges.  It is possible for the graphs of the all-terminal reliability polynomials of two distinct graphs $G,H \in \gnm$ to intersect on the interval $p \in (0,1)$, once or even multiple times \cite{chm93,k00}.  In terms of network strength, this means that when comparing two network topologies that use the same number of nodes and links, it may be the case that neither network is uniformly best:  which network is stronger may depend upon the value of $p$.  As a consequence, for some values of $n,m$ there may be no best network topology, in other words there may be no graph $H \in \gnm$ such that $R(H;p) \ge R(G;p)$ for all $G \in \gnm$ and all $p \in (0,1)$.  This phenomenon has in fact been shown to occur for various values of $n,m$ \cite{bc14,k81,mcpp91}.

However in 1991, Boesch, Li, and Suffel were the first to determine that there were $\gnm$ classes for which \textit{uniformly optimally reliable graphs} (or more simply \textit{UOR graphs}) existed, that is, there were classes $\gnm$ and graphs $H \in \gnm$ for which  $R(H;p) \ge R(G;p)$ for all $G \in \gnm$ and all $p \in (0,1)$.  In terms of networks, UOR graphs are the best network topologies on $n$ nodes and $m$ links, regardless of edge failure probability $p$.  Since then other $\gnm$ classes with UOR graphs have been identified, and in fact a rich literature has grown up around UOR graphs and all-terminal reliability in general, see for example the surveys \cite{ab84,bss09,bccgm20,c87,r21s}.

The Tutte polynomial $T(G;x,y)$ of a graph $G$ is a multivariate polynomial which encodes a vast array of structural features of the graph $G$.  One of these structural features is the all-terminal reliability $R(G;p)$.  We will say more about the definition and computation of $T(G;x,y)$ later, for now we give some idea of the scope of these structural features, beyond all-terminal reliability.  Various evaluations of $T(G;x,y)$ have been shown to give the number of spanning trees, spanning forests, and spanning connected subgraphs of $G$ \cite{em11};  the number of various types of orientations of $G$, including acyclic orientations \cite{s73}, totally cyclic orientation \cite{gz83}, and acyclic orientations with a single source \cite{l77}; various types of partial orientations of $G$, where edges may be either oriented or unoriented \cite{b18,gs96}; and various types of fourientations of $G$, where edges may be either oriented, unoriented, or bidirectionally oriented \cite{bh17}.  The Tutte polynomial $T(G;x,y)$ can also be specialized to various other well-known graph polynomials of $G$, including the all-terminal reliability polynomial $R(G;p)$, but others here include the chromatic polynomial and flow polynomial \cite{em11,w99}; various generating functions including generating functions for spanning forests of $i$ components, and spanning connected subgraphs of $i$ edges \cite{em11}; generating functions relating to orientations, partial orientations, and fourientations of a graph \cite{gs96,bh17}; and the generating function for the number of critical configurations of level $i$ of the Abelian sandpile model \cite{m97}.  
In general, $T(G;x,y)$ has been shown to encode any graph invariant which obeys a deletion-contraction law, a property sometimes called the universality property of the Tutte polynomial.
For more information on the Tutte polynomial and its applications we refer the reader to the surveys \cite{bo92,em11,ow79,w99}.

Given that such a wide variety of graph parameters and polynomials have the Tutte polynomial as a common generalization, it is natural to ask how strongly these evaluations and specializations may be related.  If $H \in \gnm$ is a UOR graph in $\gnm$, for example, does that mean that $H$ has more acyclic orientations than every other $G \in \gnm$?  Or vice-versa, does identifying an $H \in \gnm$ with the most acyclic orientations also serve to identify the UOR graph in that class?   Clearly many other relationships between Tutte parameters can be explored as well.

Unfortunately, although unsurprisingly, identifying an $H \in \gnm$ that maximizes one of the many Tutte polynomial parameters is no guarantee that $H$ maximizes others.  One of the smallest examples here occurs in the class $\mathcal{G}_{7,11}$.  Two graphs from that class are pictured in Figure \ref{fig0},
one of which is maximum for all-terminal reliability (i.e., is the UOR graph for the class) and also maximizes certain other Tutte polynomial parameters, while the other maximizes different Tutte polynomial parameters, for example acyclic orientations.\footnote{These graphs and other graphs in the class they belong to were checked using the \textit{Mathematica} program, whose {\tt TuttePolynomial} command can generate Tutte polynomials of specific graphs and whose {\tt GraphData} database contains all connected graphs on 7 or fewer vertices.}  

However, somewhat in parallel with the UOR graph phenomenon mentioned earlier, just because some graph classes do not have such a ``maximum'' graph for the Tutte polynomial does not mean that all graph classes do not.  In fact some graph classes contain graphs that are maximum for the Tutte polynomial for a remarkable variety of graph parameters, including all of the ones previously listed.  We are able to identify these Tutte-maximum graphs in a unified way by using the recently developed \textit{$(n,m)$ Tutte polynomial posets} \cite{k22}.  

An $(n,m)$ Tutte polynomial poset, abbreviated to \textit{Tutte poset} if the $\gnm$ class is arbitrary or clear, was shown in \cite{k22} to capture to a remarkable extent the behavior of graph parameters that may be obtained from the Tutte polynomial.  The poset is defined by the relation $G \preccurlyeq H$ if and only if 
\[ T(H;x,y) - T(G;x,y) = (x+y-xy)P(x,y)\]
for some polynomial $P(x,y)$ with non-negative coefficients.  
 Specifically, the relation defined above implies the following.

\begin{thm}[\cite{k22}]
Let $G,H \in \gnm$ be such that $G \preccurlyeq H$, in other words
\begin{equation}\label{tpdef} T(H;x,y) - T(G;x,y) = (x+y-xy)P(x,y)\end{equation}
where $P(x,y)$ is a polynomial with non-negative coefficients.  Then
\begin{enumerate}
\item  we have $\rho(G) \le \rho(H)$ for all of the Tutte polynomial parameters previously mentioned (spanning trees, etc.)
\item  we have $|c_i(G)| \le |c_i(H)|$ for any $i$,  where $c_i$ stands for the $i^{th}$ coefficient of the Tutte polynomial specializations previously mentioned (chromatic, etc.).
\end{enumerate}
\end{thm}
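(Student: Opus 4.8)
The plan is to split the two parts and, in each, reduce everything to the single fact that $P(x,y)$ has non-negative coefficients --- hence is non-negative on the non-negative quadrant --- together with the classical fact that $T(G;x,y)$ itself always has non-negative coefficients.

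For part (1), I would first record that each listed parameter satisfies $\rho(G)=\gamma\,T(G;a,b)$ for a fixed point $(a,b)$ and a positive constant $\gamma=\gamma(n,m)$ (such constants are forced to be positive because $\rho$ counts something): $(1,1)$ for spanning trees, $(2,1)$ for spanning forests, $(1,2)$ for spanning connected subgraphs, $(2,0)$ for acyclic orientations, $(0,2)$ for totally cyclic orientations, $(1,0)$ for acyclic orientations with a prescribed unique source, and likewise the relevant points from \cite{b18,gs96} and \cite{bh17} for the partial-orientation and fourientation counts. The one structural remark that makes all of these go through at once is that each such point $(a,b)$ satisfies $a\ge 0$, $b\ge 0$ and $a+b-ab\ge 0$; equivalently it lies in the region $\{(x,y):x,y\ge 0,\ (x-1)(y-1)\le 1\}$, which contains the unit square together with the segments out to $(2,0)$, $(0,2)$, $(2,1)$, $(1,2)$ and the coordinate lines $x=1$ and $y=1$. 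Evaluating \eqref{tpdef} at such a point gives $T(H;a,b)-T(G;a,b)=(a+b-ab)\,P(a,b)\ge 0$, since $P(a,b)\ge 0$ by non-negativity of the coefficients of $P$ and $a+b-ab\ge 0$ by the choice of point; multiplying by $\gamma$ yields $\rho(G)\le\rho(H)$. Thus part (1) becomes a finite catalogue check once the region remark is in place.

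For part (2), I would use the complementary remark that the factor $x+y-xy$ restricts to a polynomial with non-negative coefficients on each of the four coordinate lines $x=0$, $x=1$, $y=0$, $y=1$ (to $y$, to $1$, to $x$, and to $1$ respectively), while $P(x,y)$ and every $T(G;x,y)$ have non-negative coefficients outright. Writing $T(G;x,0)=\sum_i a_i(G)x^i$, $T(G;0,y)=\sum_j b_j(G)y^j$, $T(G;1,y)=\sum_j r_j(G)y^j$ and $T(G;x,1)=\sum_i s_i(G)x^i$, restricting \eqref{tpdef} to the corresponding line shows $0\le a_i(G)\le a_i(H)$ coefficientwise, and likewise for the $b_j$, $r_j$, $s_i$. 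Each polynomial or generating-function specialization in the theorem is obtained from one of these four univariate restrictions by an affine substitution in the surviving variable together with multiplication by a monomial and a sign whose value depends only on $n$ and $m$: the chromatic polynomial of a connected graph is $(-1)^{n-1}\lambda\,T(G;1-\lambda,0)$, the flow polynomial is $(-1)^{m-n+1}T(G;0,1-\lambda)$, the all-terminal reliability polynomial and the spanning-connected-subgraph generating function sit on the line $x=1$, and the spanning-forest generating function sits on the line $y=1$. Expanding an affine substitution $z\mapsto c\pm z$ by the binomial theorem sends $\sum_i a_i z^i$ to a polynomial whose coefficient of $z^k$ has a sign independent of the graph and absolute value a fixed positive multiple of $\sum_i a_i\binom{i}{k}$, which is a non-negative, coefficientwise non-decreasing function of the $a_i$; the trailing monomial and overall sign only shift indices and flip all signs together, so they preserve absolute values. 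Chaining these steps gives $|c_i(G)|\le|c_i(H)|$ for every listed specialization and every $i$.

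The obstacle here is organizational rather than conceptual. For part (1) one must confirm, using the cited literature, that each enumeration parameter really is a positive multiple of an evaluation of $T$ at a point of the region $\{x,y\ge 0,\ x+y-xy\ge 0\}$ --- in particular the fourientation enumerations of \cite{bh17} must be sorted into those that land there and those that do not --- and for part (2) one must correctly match each polynomial and generating function to one of the four coordinate lines and pin down the graph-independent sign pattern of its coefficients. Once that catalogue is assembled, the two positivity arguments above close the proof.
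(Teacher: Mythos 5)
This theorem is not proved in the paper at all --- it is imported verbatim from \cite{k22} --- so there is no in-paper proof to compare against; what you have written is, however, essentially the argument by which the result is established in that source, and it is sound. Your part (1) is exactly right: every listed enumeration is a positive multiple of an evaluation $T(G;a,b)$ at a point of the region $\{x\ge 0,\ y\ge 0,\ x+y-xy\ge 0\}$, and evaluating \eqref{tpdef} there gives $T(H;a,b)-T(G;a,b)=(a+b-ab)P(a,b)\ge 0$ because a polynomial with non-negative coefficients is non-negative on the closed first quadrant. Your part (2) is also the right mechanism: on each of the lines $x=0$, $x=1$, $y=0$, $y=1$ the factor $x+y-xy$ restricts to a polynomial with non-negative coefficients, so the univariate restrictions of $T(H)-T(G)$ have non-negative coefficients, and coefficientwise domination survives the substitutions $\lambda\mapsto 1-\lambda$ because the resulting coefficients are sign-coherent binomial combinations $\pm\sum_i a_i\binom{i}{k}$ that are monotone in the $a_i$. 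Two small caveats. First, the all-terminal reliability polynomial is not literally ``a univariate restriction followed by an affine substitution and a monomial'': one has $R(G;p)=p^{\,n-1}(1-p)^{m-n+1}T\bigl(G;1,\tfrac{1}{1-p}\bigr)$, so the prefactor $(1-p)^{m-n+1}$ must first be absorbed into the substitution to give $p^{\,n-1}\sum_j r_j(1-p)^{m-n+1-j}$ (legitimate since the $y$-degree of $T$ is at most $m-n+1$ for connected graphs), after which your binomial argument applies. Second, your deferred ``catalogue check'' is genuinely where the remaining work lives --- e.g.\ the fourientation evaluations of \cite{bh17} occur at points of the form $\bigl(\tfrac{k+l+m}{k+m},\tfrac{k+l+m}{l+m}\bigr)$, which do satisfy $(x-1)(y-1)=\tfrac{kl}{(k+m)(l+m)}\le 1$ and hence land in your region, but this must be verified case by case. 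Neither point is a conceptual gap; both are the bookkeeping you already identified.
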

Thus $G \preccurlyeq H$ implies not just maximality of the all-terminal reliability, i.e., $R(H;p) \ge R(G;p)$ for all $p \in (0,1)$, but in fact implies each coefficient of $R(H;p)$ is larger in absolute value than the corresponding coefficient in $R(G;p)$.  And similar statements follow for other Tutte polynomial parameters and other polynomial specializations of the Tutte polynomial.  We note that  there are a number of additional Tutte polynomial parameters whose behavior are also captured by Tutte posets besides the ones mentioned here, as well as some Tutte polynomial parameters for which maximality in the Tutte poset implies minimality for the parameter; we refer the reader to \cite{k22} for more details.

Given the previously noted facts, we make the following definition.
\begin{deff}
A graph $H \in \gnm$ is \emph{Tutte-maximum} if $G \preccurlyeq H$ for all $G \in \gnm$.
\end{deff}
\noindent The purpose of this paper is to demonstrate that there exist infinite families of graph classes which contain Tutte-maximum graphs.  The classes in which we are able to demonstrate these maximum graphs are, in fact, the same edge-sparse classes in which Boesch, Li, and Suffel first demonstrated the existence of UOR graphs, the 
classes $\mathcal{G}_{n,n}$, $\mathcal{G}_{n,n+1}$ and $\mathcal{G}_{n,n+2}$.

The structure of the paper is as follows.  In the next section we present preliminary facts on the Tutte polynomial which are necessary for our results.  
In the next few sections we determine the Tutte-maximum graphs for $\mathcal{G}_{n,n}$, $\mathcal{G}_{n,n+1}$, and $\mathcal{G}_{n,n+2}$ in turn, noting the increasing difficulty of the task.  As mentioned, these Tutte-maximum results generalize and extend the results of Boesch, Li, and Suffel \cite{bls91} by showing that the graphs identified there are in fact the unique maximum graphs for far more Tutte polynomial parameters than just all-terminal reliability.   To conclude we conjecture, based partly on other work on UOR graphs, that the $\mathcal{G}_{n,n+3}$ graph class has a Tutte-maximum graph, and describe the form of these conjectured maximum graphs.  We note that the graph classes $\mathcal{G}_{n,n+4}$ and $\mathcal{G}_{n,n+5}$ do not have unique maximum graphs, at least not for all $n$, and describe the counterexamples found there.  We also conjecture the form of Tutte-maximum graphs in a few other graph classes based on previous results on spanning trees.

\begin{figure}  
\begin{center}
\resizebox{12cm}{!}{
\begin{tikzpicture}

\fill (-12.3:1.2) node[black,right] {} circle (0.1cm) ;
\fill (39.1:1.2) node[black,right] {} circle (0.1cm) ;
\fill (90.5:1.2) node[black,right] {} circle (0.1cm) ;
\fill (141.9:1.2) node[black,right] {} circle (0.1cm) ;
\fill (193.3:1.2) node[black,right] {} circle (0.1cm) ;
\fill (244.7:1.2) node[black,right] {} circle (0.1cm) ;
\fill (296.1:1.2) node[black,right] {} circle (0.1cm) ;
\draw (-12.3:1.2) -- (39.1:1.2) -- (90.5:1.2) -- (141.9:1.2) -- (193.3:1.2) -- (244.7:1.2) -- (296.1:1.2) -- (-12.3:1.2);
\draw (296.1:1.2) -- (90.5:1.2) -- (244.7:1.2);
\draw (-12.3:1.2) -- (141.9:1.2);
\draw (193.3:1.2) -- (39.1:1.2);

\begin{scope}[xshift=5cm]
\fill (0,-1.1) node[black,right]  {} circle (0.1cm) ;
\fill (0,0) node[black,right] {} circle (0.1cm) ;
\fill (0,1.1) node[black,right] {} circle (0.1cm) ;
\fill (2,-1.1) node[black,right] {} circle (0.1cm) ;
\fill (2,0) node[black,right] {} circle (0.1cm) ;
\fill (2,1.1) node[black,right] {} circle (0.1cm) ;
\fill (-1,0) node[black,right] {} circle (0.1cm) ;
\foreach \x in {-1.1,0,1.1} \draw (0,-1.1) -- (2, \x);
\foreach \y in {-1.1,0,1.1} \draw (0,0) -- (2, \y);
\foreach \z in {-1.1,0,1.1} \draw (0,1.1) -- (2, \z);
\draw (0,-1.1) -- (-1,0) -- (0,1.1);
\end{scope}

  \end{tikzpicture}
	}
\end{center}
\caption{The two maximal graphs of the $(7,11)$ Tutte polynomial poset.  The left graph maximizes all-terminal reliability, and the right graph maximizes acyclic orientations, in the $\mathcal{G}_{7,11}$ graph class.  Both parameters may be obtained from the Tutte polynomial.} 
\label{fig0}
\end{figure}
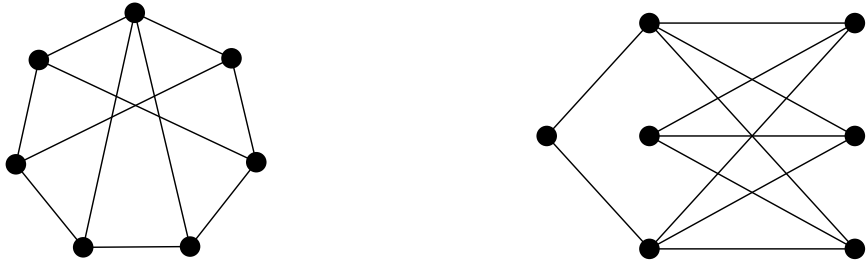

For any terms and notations left undefined in the paper we refer the reader to a standard reference like \cite{b02}.   In general our notation will be standard as well, although we note the following non-standard notations which we may employ.  If there is no confusion, we omit brackets around vertex or edge sets, writing for example $G-v$ (resp. $G-uv$) instead of $G-\{v\}$ (resp. $G-\{uv\}$) to indicate the graph $G$ with the vertex $v$ (resp. edge $uv$) removed.  In addition the following notation will be used extensively.  The Tutte polynomial is block-invariant, in other words, if graphs $G$ and $H$ have the same block structure then $T(G;x,y) = T(H;x,y)$.  When considering graphs that have multiple blocks, then, we introduce the notation $G= B_1 \cdot B_2$ to indicate any graph whose blocks are $B_1$ and $B_2$, or more precisely, any member of the Tutte poset equivalence class whose graphs consist of blocks $B_1$ and $B_2$.  (The `dot' is intended to suggest both the common vertex of $B_1$ and $B_2$ and the multiplication operation since Tutte polynomials multiply over blocks.)  Furthermore, when multiples of the same block appear we may abbreviate in the natural way; for example $2C_4 \cdot 3K_5$ stands for any graph whose five blocks consist of two 4-cycles and three complete graphs on 5 vertices.

\section{Preliminaries}

We begin with some necessary facts about the Tutte polynomial.  The Tutte polynomial $T(G;x,y)$ of a graph $G$ may be defined recursively using edge deletions and contractions as 
\begin{equation} \label{def} T(G;x,y) = \left\{\begin{array}{ll} T(G-e;x,y) + T(G/e;x,y) & \textrm{if $e$ is not a bridge or loop}\\
x T(G/e;x,y) & \textrm{if $e$ is a bridge}\\
y T(G-e;x,y) & \textrm{if $e$ is a loop} \end{array} \right. 
\end{equation}
with the Tutte polynomial of a graph with no edges equal to 1.  As mentioned in the Introduction, the Tutte polynomial factors over blocks, i.e., when $G= G_1 \cdot G_2 \cdot \ldots \cdot G_t$, then
\begin{equation} \label{fact}
T(G;x,y) = T(G_1;x,y) T(G_2;x,y) \dots T(G_t;x,y)
\end{equation}
regardless of how the blocks of $G$ are arranged.  Thus the Tutte polynomial of every tree on $n$ vertices is identically $x^{n-1}$, since any tree on $n$ vertices has $n-1$ bridges as its constituent blocks.  (In terms of Tutte posets, this shows that the $(n,n-1)$ Tutte polynomial poset is rather uninteresting, being the trivial poset with a single equivalence class, that equivalence class containing every tree on $n$ vertices.)

Using the deletion-contraction definition (\ref{def}) and factoring over blocks, it is easy to determine the Tutte polynomial of some basic graphs that will be useful later.  We mention two well-known graphs and their Tutte polynomials in particular.  Let $C_n$ denote the cycle on $n$ vertices, $n\geq2$, with the cycle $C_2$ being the multigraph consisting of two edges between two vertices.  We then have
\begin{equation} \label{cyc}
T(C_n;x,y) = y + x + \dots + x^{n-1}.
\end{equation}
where when $n=1$ we take $C_1$ to be a loop and $T(C_1;x,y) = y$ as in (\ref{def}).  With $M_m$ denoting a \emph{multiedge block}, a graph consisting of two vertices with $m\geq2$ edges between them, we have
\begin{equation} \label{mte}
T(M_m;x,y) = x + y + \dots + y^{m-1}.
\end{equation}
where when $m=1$ we take $M_1$ to be a bridge and $T(M_1;x,y) = x$ as in (\ref{def}).  All of the previous basic facts can be found, for example, in the surveys \cite{bo92,em11,ow79,w99}.  We also have the following useful generalization of (\ref{def}) above, found in \cite{hpr10}.  A \emph{$k$-ear} $E_k$ of a graph $G$ is an induced path of $k$ edges between two distinct vertices $u,v \in V(G)$.   A 1-ear is then just an edge between two vertices, while for $k \ge 2$ a $k$-ear may be viewed as a subdivided edge between two vertices.  There is therefore a natural notion of deleting and/or contracting a  $k$-ear $E_k$ , which we denote by $G-E_k$ and $G/E_k$ respectively:  by $G-E_k$ we mean deleting from $G$ all $k$ edges of $E_k$ along with the internal vertices of the path $E_k$, and by $G/E_k$ we mean deleting the ear $E_k$ and then identifying the two endvertices of $E_k$.
\begin{thm}[\cite{hpr10}]  \label{e1}
Let $G$ be a graph with $k$-ear $E_k$, $k \ge 1$, with all edges of $E_k$ non-bridges.  Then 
\[T(G;x,y) = (1+x+x^2+\dots+x^{k-1})T(G-E_k;x,y)+T(G/E_k;x,y).\]
\end{thm}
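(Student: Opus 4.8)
The plan is to prove the identity by induction on $k$, peeling off one ear edge at a time with the ordinary deletion--contraction rule (\ref{def}). The base case $k=1$ is immediate: a $1$-ear is a single edge $e=uv$, and since $e$ is a non-bridge and (as $u\neq v$) not a loop, (\ref{def}) gives $T(G)=T(G-e)+T(G/e)=T(G-E_1)+T(G/E_1)$, which is exactly the asserted identity, the prefactor $1+x+\cdots+x^{k-1}$ being $1$ when $k=1$.

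For the inductive step, label the ear as $u=v_0,v_1,\ldots,v_k=v$ with edges $e_i=v_{i-1}v_i$, and apply (\ref{def}) to $e=e_k$; this is legitimate because $e$ is a non-bridge by hypothesis and is not a loop. First I would handle $T(G-e)$: in $G-e$ the vertex $v_{k-1}$ has degree $1$, so the edges $e_1,\ldots,e_{k-1}$ and vertices $v_1,\ldots,v_{k-1}$ form a pendant path of length $k-1$ hanging off $v_0$; contracting those $k-1$ successive bridges via (\ref{def}) — each contraction of a pendant edge simply deletes its pendant vertex — shows $T(G-e)=x^{k-1}\,T(G-E_k)$, since in the end exactly the internal vertices and all ear edges are removed. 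Next I would handle $T(G/e)$: contracting $e$ identifies $v_{k-1}$ with $v_k$, and the remaining edges $e_1,\ldots,e_{k-1}$ form an induced $(k-1)$-ear of $G/e$ with non-bridge edges, so by the inductive hypothesis
\[ T(G/e)=(1+x+\cdots+x^{k-2})\,T\big((G/e)-E_{k-1}\big)+T\big((G/e)/E_{k-1}\big). \]
Identifying $(G/e)-E_{k-1}\cong G-E_k$ and $(G/e)/E_{k-1}\cong G/E_k$ (contracting $e$ and then the remaining ear edges contracts all of $E_k$) and adding the two contributions gives $T(G)=\big(x^{k-1}+1+x+\cdots+x^{k-2}\big)T(G-E_k)+T(G/E_k)=(1+x+\cdots+x^{k-1})T(G-E_k)+T(G/E_k)$, as wanted.

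The step I expect to be the main obstacle — really the only non-routine point — is verifying that $G/e$ genuinely contains an induced $(k-1)$-ear whose edges are all non-bridges, so that the inductive hypothesis applies. One must check that contracting $e$ creates no chord on the shortened path (such a chord would have been a chord of $E_k$ in $G$), that the intermediate vertices still have degree $2$, and that $e_1,\ldots,e_{k-1}$ remain non-bridges in $G/e$ — the last point because deleting all of $E_{k-1}$ from $G/e$ leaves $u$ and the contracted vertex connected precisely when $u$ and $v$ are connected in $G-E_k$, which is guaranteed by the hypothesis that the original ear edges are non-bridges. Everything else is straightforward bookkeeping of which edges and vertices survive the various deletions and contractions.
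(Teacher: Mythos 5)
Your proof is correct. The paper itself states this theorem as a quoted result from \cite{hpr10} and gives no proof of its own, so there is nothing internal to compare against; your induction on $k$ --- peel off the last ear edge by deletion--contraction, reduce the deletion term to $x^{k-1}T(G-E_k)$ by contracting the resulting pendant path of bridges, and apply the inductive hypothesis to the $(k-1)$-ear in the contraction term --- is exactly the standard argument for this ear-decomposition identity. The one point worth making explicit is that you use the fact that the internal vertices of the ear have degree $2$ in $G$ (so that $v_{k-1}$ really is pendant in $G-e_k$ and is silently absorbed when you identify $(G/e_k)-E_{k-1}$ with $G-E_k$); this is implicit in the paper's description of a $k$-ear as a subdivided edge and in its definition of $G-E_k$, and you correctly verify the remaining hypotheses (no chords created, ear edges still non-bridges) needed to invoke the inductive hypothesis.
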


As mentioned in the Introduction, the $(n,m)$ Tutte polynomial poset is defined on $\gnm$ by the relation $G \preccurlyeq H$ if and only if 
\begin{equation}\label{tpdef} T(H;x,y) - T(G;x,y) = (x+y-xy)P(x,y)\end{equation}
for some polynomial $P(x,y)$ with non-negative coefficients.  
If $P(x,y) = 0$ then $T(G;x,y) = T(H;x,y)$ and $G$ and $H$ are \emph{Tutte polynomial equivalent} or \emph{$T$-equivalent} graphs.  The $(n,m)$ Tutte polynomial poset thus is more precisely a poset on the equivalence classes of the graphs of $\mathcal{G}_{n,m}$ with $G,H$ equivalent if and only if $T$-equivalent, but there will be no problem if we allow any representative of an equivalence class to represent the class as a whole.  If $P(x,y) \neq 0$ then we indicate this by $G \prec H$.   

Finally, the following technical lemma will be useful.  The proof, which is an algebraic exercise, is omitted.

\begin{lem} \label{tlem1}
Let $a,b$ be positive integers.  Then \[x + y\left(\sum_{i=0}^{a+b-3}y^i \right) - y^{a-1}\left(x + y\left(\sum_{i=0}^{b-2}y^i\right)\right) = (x+y-xy)\sum_{i=0}^{a-2}y^i
\] where it is understood that $\displaystyle \sum_{i=0}^{-1}y^i = 0$.
\end{lem}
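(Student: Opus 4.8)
The plan is to prove the identity directly, by expanding the left-hand side, shifting summation indices, and telescoping. First I would distribute: writing $y\sum_{i=0}^{a+b-3}y^i=\sum_{j=1}^{a+b-2}y^j$ and $y^{a-1}\!\left(x+y\sum_{i=0}^{b-2}y^i\right)=xy^{a-1}+\sum_{k=a}^{a+b-2}y^k$ (the latter via the reindexing $k=a+i$), so that the left-hand side becomes
\[ x-xy^{a-1}+\sum_{j=1}^{a+b-2}y^j-\sum_{k=a}^{a+b-2}y^k.\]
The range $k=a,\dots,a+b-2$ is precisely the tail of the range $j=1,\dots,a+b-2$, so the two sums cancel down to $\sum_{j=1}^{a-1}y^j$, leaving the left-hand side equal to $x-xy^{a-1}+\sum_{j=1}^{a-1}y^j$.

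The second step is to factor this expression. Using $1-y^{a-1}=(1-y)\sum_{i=0}^{a-2}y^i$ and $\sum_{j=1}^{a-1}y^j=y\sum_{i=0}^{a-2}y^i$, I would rewrite it as
\[ x(1-y)\sum_{i=0}^{a-2}y^i+y\sum_{i=0}^{a-2}y^i=\bigl(x(1-y)+y\bigr)\sum_{i=0}^{a-2}y^i=(x+y-xy)\sum_{i=0}^{a-2}y^i,\]
which is the right-hand side. An equivalent and perhaps more mechanical route is to multiply both sides by $y-1$, replace every finite geometric sum by its closed form, and check the resulting polynomial identity $(y-1)\cdot(\text{LHS})=(y^{a-1}-1)(x+y-xy)=(y-1)\cdot(\text{RHS})$ by expansion; since these are polynomials in $x,y$ agreeing whenever $y\neq 1$, equality holds identically.

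I do not expect a genuine obstacle here; the only point requiring care is the degenerate values of $a$ and $b$. When $a=1$ the right-hand sum $\sum_{i=0}^{a-2}y^i$ is empty, and correspondingly in the computation above both $x-xy^{a-1}$ and $\sum_{j=1}^{a-1}y^j$ vanish, so both sides are $0$; when $b=1$ the sum $\sum_{i=0}^{b-2}y^i$ appearing on the left is empty, and the identity is just the $b=1$ instance of the same manipulation. I would simply remark that the stated convention $\sum_{i=0}^{-1}y^i=0$ makes every index shift used above valid in these cases, so no separate argument is needed.
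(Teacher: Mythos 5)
Your computation is correct and complete, and it is exactly the routine algebraic verification the paper has in mind: the authors omit the proof of Lemma \ref{tlem1} entirely, calling it ``an algebraic exercise,'' so your reindex-and-telescope argument (with the factorization $x(1-y^{a-1})+y\sum_{i=0}^{a-2}y^i=(x+y-xy)\sum_{i=0}^{a-2}y^i$ and the check of the degenerate cases $a=1$, $b=1$) supplies precisely the intended missing details.
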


\section{Tutte polynomial posets for $\mathcal{G}_{n,n}$}

Graphs in the class $\mathcal{G}_{n,n}$ are all \emph{unicyclic graphs}, graphs of the form $C_k \cdot (n-k) K_2$ for $3 \le k \le n$.  As such, the Tutte polynomial poset of $\mathcal{G}_{n,n}$ could be resolved using (\ref{def}), (\ref{fact}), and (\ref{cyc}) from the previous section, along with the necessary algebra.  Instead however we prove a more general result in Theorem \ref{comb} below, which effectively says that moving an edge in order to combine blocks will produce a ``better'' graph in the Tutte poset.  Theorem \ref{comb} will resolve $\mathcal{G}_{n,n}$ but will also be useful in resolving larger classes.

\begin{figure} 
\begin{center}
\resizebox{15cm}{!}{
\begin{tikzpicture}

\draw (-2,0) ellipse (2cm and 1.5cm);
\draw (2,0) ellipse (2cm and 1.5cm);
\fill (0,0) node[black,label={[yshift=-0.05cm,xshift=-0.3cm]{\Large $v$}}] {} circle (0.1cm);

\fill (1.3,0.5) node[black,label={[yshift=-0.1cm,xshift=0.3cm]{\Large $w$}}] {} circle (0.1cm);
\fill (1.3,0.1) node[black,right] {} circle (0.1cm);
\fill (1.3,-0.9) node[black,right] {} circle (0.1cm);
\fill (-1.3,0.5) node[black,label={[yshift=-0.1cm,xshift=-0.3cm]{\Large $u$}}] {} circle (0.1cm);
\fill (-1.3,0.1) node[black,right] {} circle (0.1cm);
\fill (-1.3,-0.9) node[black,right] {} circle (0.1cm);

\fill (1.3,-0.6) node[black,right] {} circle (0.03cm);
\fill (1.3,-0.2) node[black,right] {} circle (0.03cm);
\fill (1.3,-0.4) node[black,right] {} circle (0.03cm);
\fill (-1.3,-0.6) node[black,right] {} circle (0.03cm);
\fill (-1.3,-0.2) node[black,right] {} circle (0.03cm);
\fill (-1.3,-0.4) node[black,right] {} circle (0.03cm);

\draw (1.3,0.5) -- (0,0) -- (1.3,-0.9);
\draw (-1.3,0.5) -- (0,0) -- (-1.3,-0.9);
\draw (1.3,0.1) -- (0,0) -- (-1.3,0.1);

\begin{scope}[xshift=9cm]
\draw (-2,0) ellipse (2cm and 1.5cm);
\draw (2,0) ellipse (2cm and 1.5cm);
\fill (0,0) node[black,label={[yshift=-0.05cm,xshift=-0.3cm]{\Large $v$}}] {} circle (0.1cm);

\fill (1.3,0.5) node[black,label={[yshift=-0.1cm,xshift=0.3cm]{\Large $w$}}] {} circle (0.1cm);
\fill (1.3,0.1) node[black,right] {} circle (0.1cm);
\fill (1.3,-0.9) node[black,right] {} circle (0.1cm);
\fill (-1.3,0.5) node[black,label={[yshift=-0.1cm,xshift=-0.3cm]{\Large $u$}}] {} circle (0.1cm);
\fill (-1.3,0.1) node[black,right] {} circle (0.1cm);
\fill (-1.3,-0.9) node[black,right] {} circle (0.1cm);

\fill (1.3,-0.6) node[black,right] {} circle (0.03cm);
\fill (1.3,-0.2) node[black,right] {} circle (0.03cm);
\fill (1.3,-0.4) node[black,right] {} circle (0.03cm);
\fill (-1.3,-0.6) node[black,right] {} circle (0.03cm);
\fill (-1.3,-0.2) node[black,right] {} circle (0.03cm);
\fill (-1.3,-0.4) node[black,right] {} circle (0.03cm);

\draw (0,0) -- (1.3,0.5);
\path (1.3,0.5) edge [bend right] (-1.3,0.5); 
\draw (0,0) -- (1.3,-0.9);
\draw  (0,0) -- (-1.3,-0.9);
\draw (1.3,0.1) -- (0,0) -- (-1.3,0.1);

\end{scope}

  \end{tikzpicture}
}
\end{center}
\caption{An illustration of the graphs $G$ (left) and $H$ (right) from Theorem \ref{comb}.  Additional blocks, if present, are identical in the two graphs and are not shown.} 
\label{fig1}
\end{figure}
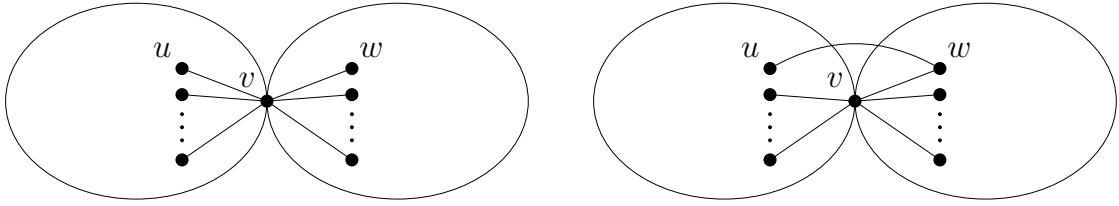

\begin{thm}  \label{comb}
Let $G_1,G_2$ be two blocks of the graph $G$, with $v$ the common cutvertex of $G_1,G_2$, and let $uv \in E(G_1)$ and $wv \in E(G_2)$.  Define $H = G- uv + uw$.  Then we have $G \preccurlyeq H$ and, if $G_1 \neq K_2$, we have $G \prec H$.
\end{thm}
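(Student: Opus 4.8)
The plan is to reduce, by block multiplicativity, to a comparison of two Tutte polynomials, and then to resolve that comparison with deletion--contraction together with two nested inductions. First, note that the only blocks of $G$ affected by the move $G\mapsto H=G-uv+uw$ are $G_1$ and $G_2$: writing $B$ for the subgraph of $H$ on vertex set $V(G_1)\cup V(G_2)$ with edge set $(E(G_1)\setminus\{uv\})\cup E(G_2)\cup\{uw\}$, the remaining blocks $B_3,\dots,B_t$ of $G$ are still blocks of $H$, so by (\ref{fact})
\[
T(H)-T(G)=T(R)\bigl(T(B)-T(G_1)T(G_2)\bigr),\qquad T(R):=\prod_{i\ge 3}T(B_i)
\]
(with $T(R)=1$ if $G$ has no further blocks), and $T(R)$ has non-negative coefficients and is not the zero polynomial. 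Hence it suffices to show $T(B)-T(G_1)T(G_2)=(x+y-xy)P'$ for some $P'$ with non-negative coefficients, with $P'\neq 0$ when $G_1\neq K_2$. If $G_1=K_2$ then $uv$ is a bridge and $B$ is simply $G_2$ with a pendant edge, so $T(B)=xT(G_2)=T(G_1)T(G_2)$ and $P'=0$. Assume now $G_1\neq K_2$; then $uv$ is a non-bridge non-loop edge of $G_1$, and $uw$ is a non-bridge non-loop edge of $B$ (there is a $u$--$v$ path in $G_1-uv$ by $2$-connectivity, and the edges $vw,wu$ close a cycle through $uw$). Applying (\ref{def}) to $uw$ in $B$, and noting that $B-uw$ has the blocks of $G_1-uv$ together with those of $G_2$, gives $T(B)=T(G_1-uv)T(G_2)+T(B/uw)$; combining this with $T(G_1)=T(G_1-uv)+T(G_1/uv)$ yields $T(B)-T(G_1)T(G_2)=T(B/uw)-T(G_1/uv)T(G_2)$. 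Since contracting $uw$ identifies $u$ and $w$, the graph $B/uw$ is obtained from $G_1-uv$ (with marked vertices $u,v$) and $G_2$ (with marked vertices $w,v$) by identifying $u$ with $w$ and the two copies of $v$.

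Next I would prove, by induction on $|E(\Gamma)|$, the following more general statement: for every connected graph $\Gamma$ with two distinct marked vertices $r,s$, if $\widehat\Gamma$ denotes the graph built from $G_1-uv$ and $\Gamma$ by identifying $u$ with $r$ and $v$ with $s$, then $T(\widehat\Gamma)-T(G_1/uv)T(\Gamma)=(x+y-xy)Q_\Gamma$ for some $Q_\Gamma$ with non-negative coefficients, and $Q_\Gamma\neq 0$; the case $\Gamma=G_2$, $r=w$, $s=v$ then completes the reduction above. Any block of $\Gamma$ not lying on the $r$--$s$ block-path factors out of both $T(\widehat\Gamma)$ and $T(\Gamma)$ (contributing a non-negative factor), so we may assume $\Gamma$ is a chain of blocks from $r$ to $s$. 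If some block of $\Gamma$ is not a $K_2$, it contains a non-loop non-bridge edge $e$, and applying (\ref{def}) to $e$ reduces to smaller instances, using that $\widehat\Gamma-e=\widehat{\Gamma-e}$ and $\widehat\Gamma/e=\widehat{\Gamma/e}$, except in the degenerate case where $e$ joins $r$ and $s$, in which $\widehat\Gamma/e$ is $G_1/uv$ and $\Gamma/e$ glued at a single vertex, so that term cancels against $T(G_1/uv)T(\Gamma/e)$. Otherwise $\Gamma$ is a path of length $\ell\ge 1$, and then $\widehat\Gamma$ is precisely $G_1$ with the edge $uv$ subdivided $\ell-1$ times; applying Theorem~\ref{e1} to this $\ell$-ear gives $T(\widehat\Gamma)=(1+x+\dots+x^{\ell-1})T(G_1-uv)+T(G_1/uv)$, whence $T(\widehat\Gamma)-x^{\ell}T(G_1/uv)=(1+x+\dots+x^{\ell-1})\bigl(T(G_1)-xT(G_1/uv)\bigr)$.

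Thus everything comes down to a vertex-splitting lemma: for any connected graph $\Gamma$ and any non-loop edge $e\in E(\Gamma)$, $T(\Gamma)-xT(\Gamma/e)=(x+y-xy)Q_0$ for some $Q_0$ with non-negative coefficients. I would prove this by a second induction on $|E(\Gamma)|$. If $e$ is a bridge the left-hand side is $0$ by (\ref{def}). Otherwise choose an edge $e'\neq e$ that is not parallel to $e$; applying (\ref{def}) to $e'$ and using that contractions commute pulls out a factor $x$, $y$, or nothing according as $e'$ is a bridge, a loop, or neither, after which the induction hypothesis applies to each term (the choice of $e'$ guarantees that $e$ stays a non-loop edge in both $\Gamma-e'$ and $\Gamma/e'$). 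If no such $e'$ exists then $\Gamma$ is a multiedge block $M_k$ with $k\ge 2$, and Lemma~\ref{tlem1} with $b=1$ gives exactly $T(M_k)-xy^{k-1}=(x+y-xy)(1+y+\dots+y^{k-2})$. For strictness when $G_1\neq K_2$: then $G_1$ is $2$-connected with at least three edges, hence bridgeless, so $T(G_1;0,y)$ is not identically zero while $xT(G_1/uv)$ vanishes at $x=0$; therefore $T(G_1)-xT(G_1/uv)\neq 0$, and since this nonzero term survives the preceding reductions (being multiplied only by the nonzero polynomials $1+x+\dots+x^{\ell-1}$ and $T(R)$), we obtain $P\neq 0$, i.e.\ $G\prec H$.

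The step I expect to be the main obstacle is the vertex-splitting lemma, or more precisely the bookkeeping needed to keep the quotient polynomial non-negative (and nonzero) throughout: the intermediate graphs arising in both inductions ($\Gamma-e$, $\Gamma/e$, $G_1$ with an ear contracted, and their relatives) are merely connected multigraphs and can acquire loops and bridges, so the inductive hypotheses must be stated for arbitrary connected multigraphs, and at every deletion--contraction step one must check that the factors being extracted are themselves polynomials with non-negative coefficients. Once this is in place the identities are routine, with Lemma~\ref{tlem1} supplying the one genuinely algebraic base case.
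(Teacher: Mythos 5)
Your argument is correct in outline, but it is organized quite differently from the paper's. The paper makes one global move: after reducing to the case where $G$ has only the two blocks $G_1,G_2$, it deletes/contracts \emph{every} edge not incident to the cutvertex $v$ simultaneously, so that each branch of the recursion compares two multistars differing in a single edge, and Lemma \ref{tlem1} finishes each branch. You instead perform a chain of local reductions: first delete/contract the moved edge to reduce to comparing the two-vertex gluing of $G_1-uv$ with $G_2$ against $G_1/uv\cdot G_2$; then an induction on $G_2$ reduces to the case where $G_2$ is a path, i.e.\ to $G_1$ with $uv$ subdivided versus $G_1/uv$ with a pendant path, which Theorem \ref{e1} converts into the single inequality $xT(\Gamma/e)\preccurlyeq T(\Gamma)$; and that ``vertex-splitting lemma'' is proved by a second deletion--contraction induction whose base case is again Lemma \ref{tlem1}. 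Both proofs therefore bottom out in the same algebraic identity, but your route produces two reusable intermediate statements --- notably $T(\Gamma)-xT(\Gamma/e)=(x+y-xy)Q$ for any non-loop edge $e$, which strictly generalizes Corollary \ref{bridgeelim} --- at the cost of the extra bookkeeping you yourself flag (stating both inductive hypotheses for connected multigraphs, checking that bridge/loop status of $e'$ is unchanged in $\Gamma/e$ when $e'$ is not parallel to $e$, and handling the degenerate contraction that identifies the two marked vertices). The one step you should justify rather than assert is $T(G_1;0,y)\not\equiv 0$ for $2$-connected $G_1\neq K_2$, which you use for strictness; it is true (e.g.\ because $(-1)^{n(G_1)}T(G_1;0,1-t)$ is the flow polynomial, a monic polynomial of degree $n(G_1)\ge 1$, or because $t_{0,n(G_1)}=1$), and since every branch of your decompositions contributes a polynomial with non-negative coefficients, this single nonzero contribution indeed cannot be cancelled. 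By contrast, the paper obtains strictness more combinatorially, by exhibiting one deletion/contraction sequence that leaves a multiedge block of multiplicity at least $2$ at $v$.
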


An illustration of $G$ and $H$ appears in Figure \ref{fig1}.  Our proof approach is similar to the proof of one of the main theorems (Theorem 3.1) in \cite{k22}, although here the simple graphical structure and minimal change from $G$ to $H$ make the process much more straightforward.

\begin{proof}
Since the Tutte polynomial factors over blocks and is itself a polynomial with non-negative coefficients, we may assume that $G$ consists solely of the two blocks $G_1, G_2$ and that $H$ is the single block $G - uv + uw$.  
Furthermore, if $G_1= K_2$ then the block structure of $G$ and $H$ are identical.  In this case $T(G;x,y) - T(H;x,y) = 0$ and trivially we have $G \preccurlyeq H$.  Hence for the rest of the proof we assume that $G_1 \neq K_2$.

The graphs $G$ and $H$ differ only in one edge and so, abusing notation somewhat,  we will often use the same labels to identify corresponding edges of the graphs, as this helps identify corresponding terms in the decomposition (\ref{def}).  To begin, the subgraphs $G-v$ and $H - v - uw$ are identical and we let $S$ denote the edges of both subgraphs.  We now decompose $T(G;x,y)$ and $T(H;x,y)$ by ``deleting/contracting every edge of $S$''.  Formally, let $\{e_1,\dots,e_{\ell}\}$ be a fixed but arbitrary ordering of the edges of $S$, and let $R = (r_1,\dots,r_{\ell})$ be a binary vector in $\{0,1\}^{\ell}$.  Form the multigraphs $G_R$ and $H_R$ by sequentially deleting when $r_i=0$, or contracting when $r_i=1$, each edge $e_i$ in turn.  Repeated application of the deletion-contraction formula (\ref{def}) to $T(G;x,y)$ and $T(H;x,y)$ then produces 
\begin{equation} \label{decom1}
T(H;x,y)-T(G;x,y) = \sum_{R \in \mathcal{R}} (T(H_R;x,y) -  T(G_R;x,y))
\end{equation}
where $\mathcal{R} \subseteq \{0,1\}^{\ell}$ denotes the permissible sequences of deletions and contractions on $S$.  (Bridges and loops may be formed by certain sequences of deletions and contractions, which means not all possible binary vectors may appear.)  By the above it suffices to show that $G_R \preccurlyeq H_R$ for every $R \in \mathcal{R}$.

Let $R \in \mathcal{R}$ be fixed, then, and consider $G_R$ and $H_R$.  Since every edge of $S$ has been either deleted or contracted, the graph $G_R$ is a multistar, that is, the blocks $M_{m_1}, \dots, M_{m_k}$ of $G_R$ are each multiedge blocks as in (\ref{mte}), with $m_1, \dots, m_k$ denoting the edge multiplicities, and the cutvertex $v$ is the common vertex of the multiedge blocks.  
Note that, since $G$ consists of two blocks, then $G_R$ contains at least two multiedge blocks and $k \ge 2$.  
As in $G$ and $H$ themselves, the corresponding graph $H_R$ differs from $G_R$ only in one edge.  In $G_R$ we may assume that the edge is part of the first block $M_{m_1}$, while in $H_R$ the edge connects the non-$v$ vertices of block $M_{m_1}$ and $M_{m_2}$.  Again abusing notation somewhat, we let $e$ denote that edge in both $G_R$ and $H_R$.   Since $G_R - e = H_R - e$, by (\ref{def}) we thus have
\[T(H_R;x,y) - T(G_R;x,y) = T(H_R/e;x,y) - T(G_R/e;x,y)\]
  
Thus it suffices to show that $G_R/e \preccurlyeq H_R/e$.  Since in $G_R$ we have $e \in E(M_{m_1})$, then $G_R/e = (m_1-1)C_1 \cdot M_{m_2} \cdot \ldots \cdot M_{m_k}$ with $C_1$ denoting a loop. 
In $H_R$ edge $e$ instead connects the non-$v$ vertex of the first multiedge block to the non-$v$ vertex of $M_{m_2}$, and thus 
the graph $H_R/e$ combines these blocks into the single multiedge block $M_{m_1+m_2-1}$.  Therefore, using (\ref{fact}) we have
\begin{align*} 
T(H_R/e;x,y) &- T(G_R/e;x,y) \\
&= T(M_{m_1+m_2-1} \cdot M_3 \cdot \ldots \cdot M_{m_k};x,y)-  T((m_1-1)C_1\cdot M_{m_2} \cdot M_{m_3} \cdot \ldots \cdot M_{m_k};x,y) \\
&= \left(T(M_{m_1+m_2-1};x,y) - T((m_1-1)C_1 \cdot M_{m_2};x,y)\right)\prod_{i=3}^k T(M_{m_i};x,y) 
\intertext{When $m_1 = 1$ then the above expression reduces to zero (in fact $G_R/e$ and $H_R/e$ are block-isomorphic) and we have trivially $G_R/e \preccurlyeq H_R/e$.  Otherwise, using (\ref{mte}), and then Lemma \ref{tlem1} with $a=m_1$ and $b=m_2$, the expression reduces to}
&= \left(\left(x + y+ \dots + y^{m_1+m_2-2}\right) - y^{m_1-1}\left(x + y + \dots + y^{m_2-1} \right)\right)\prod_{i=3}^k T(M_{m_i};x,y) \\
&= (x+y-xy)\left(1 + y + \dots + y^{m_1-2}\right) \prod_{i=3}^k T(M_{m_i};x,y) 
\end{align*}
where it is understood that if $m_1 = 1$ then the second parenthetical expression in the final line is equal to 0.  Clearly $(1+ y + \dots + y^{m_2-2}) \prod_{i=3}^k T(M_{m_i};x,y)$ is a polynomial with non-negative coefficients, and we have $G_R/e \preccurlyeq H_R/e$, which suffices to show that $G \preccurlyeq H$ as desired. The proof is completed by noting that, when $G_1 \neq K_2$, then $G_1$ is necessarily 2-connected, which implies that a cycle including $u,v$ exists.  At least one of the $R \in \mathcal{R}$ contracts every edge of this cycle that is not incident to $v$, and this results in $m_1 \ge 2$ in $G_R$ for that $R$.  Thus $T_H(x,y) - T_G(x,y) =(x+y-xy)P(x,y)$ where $P(x,y)$ is non-zero, giving $G \prec H$.
\end{proof}

As an immediate consequence we have the Tutte polynomial poset structure for the unicyclic graph class $\mathcal{G}_{n,n}$, by repeatedly incorporating bridges successively into the cycle, i.e., by applying Theorem \ref{comb} with $G_1$ equal to the cycle and $G_2$ equal to a bridge incident to the cycle.

\begin{thm}  \label{gnn}
The Tutte polynomial poset for $G_{n,n}$ is the chain
\[C_3 \cdot (n-3)K_2 \prec C_4 \cdot (n-4)K_2 \prec \ldots \prec C_{n-1} \cdot K_2 \prec C_n.\]
In particular, the unique Tutte-maximum graph of the poset is the cycle $C_n$, and the minimum graphs of the poset have blocks consisting of a single triangle $C_3$ and $(n-3)$ bridges.
\end{thm}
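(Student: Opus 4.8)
The plan is to derive Theorem~\ref{gnn} as an essentially immediate corollary of Theorem~\ref{comb}. Every graph in $\mathcal{G}_{n,n}$ is unicyclic, hence $T$-equivalent to one of the block-graphs $C_k \cdot (n-k)K_2$ for $3 \le k \le n$, so the poset has at most these $n-2$ equivalence classes. To establish the chain, I would fix $k$ with $3 \le k \le n-1$ and show $C_k \cdot (n-k)K_2 \prec C_{k+1} \cdot (n-k-1)K_2$. Take the graph $G$ whose blocks are a $k$-cycle together with $n-k$ bridges; choose the cycle as $G_1$ and one bridge incident to the cutvertex shared with the cycle as $G_2$. Writing $uv$ for an edge of $G_1$ on the cycle incident to $v$ and $wv$ for the (unique) edge of the bridge $G_2$, the operation $H = G - uv + uw$ of Theorem~\ref{comb} replaces the $k$-cycle plus pendant bridge by a $(k{+}1)$-cycle, leaving the remaining $n-k-1$ bridges untouched; that is, $H$ has block type $C_{k+1}\cdot(n-k-1)K_2$. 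Since $G_1 = C_k$ with $k \ge 3$ is not $K_2$, Theorem~\ref{comb} gives the strict relation $G \prec H$. Chaining these relations for $k = 3, 4, \ldots, n-1$ yields
\[
C_3 \cdot (n-3)K_2 \prec C_4 \cdot (n-4)K_2 \prec \ldots \prec C_{n-1}\cdot K_2 \prec C_n,
\]
and because every equivalence class of $\mathcal{G}_{n,n}$ appears in this list, the poset is exactly this chain.

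I would then read off the last two assertions. The chain has a unique top element, the class of $C_n$, which by the definition of Tutte-maximum is the unique Tutte-maximum graph; and it has a unique bottom element, the class of graphs with blocks $C_3$ and $n-3$ bridges. To be fully careful one should check that distinct $k$ give genuinely distinct (i.e.\ not $T$-equivalent) classes, so that the chain really has $n-2$ elements and in particular a well-defined unique minimum; this follows from \eqref{cyc} and \eqref{fact}, since $T(C_k \cdot (n-k)K_2;x,y) = x^{n-k}(y + x + \cdots + x^{k-1})$, and these polynomials are pairwise distinct for $3 \le k \le n$ (the exponent of the lowest-degree term, namely $n-k$, already distinguishes them). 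One might also note that, alternatively, strictness of each step in the chain immediately forces the classes to be distinct, so this check is subsumed by the application of Theorem~\ref{comb}.

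There is essentially no obstacle here: the only thing requiring a moment's thought is verifying that the single edge-move in Theorem~\ref{comb}, applied with $G_1$ the cycle and $G_2$ a pendant bridge, indeed produces a graph whose block structure is $C_{k+1}\cdot(n-k-1)K_2$ rather than something else. This is a small direct check: the new edge $uw$ together with the path from $u$ to $w$ through $v$ along the old cycle and the old bridge forms a cycle on $k+1$ vertices, the old edge $uv$ is gone, and the bridge's non-$v$ endpoint $w$ is now absorbed into this cycle, so the cutvertex of the modified block with the rest of $G$ is again a vertex of a $(k{+}1)$-cycle, with the remaining $n-k-1$ bridges hanging off exactly as before. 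Since Tutte polynomials are block-invariant, it does not matter which representative of the class we pick. With that observation in hand the theorem follows.
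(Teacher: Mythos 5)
Your proposal is correct and follows exactly the route the paper intends: the paper states Theorem \ref{gnn} as an immediate consequence of Theorem \ref{comb}, obtained by applying it with $G_1$ the cycle and $G_2$ a bridge incident to the cycle, which is precisely your argument. The extra details you supply (that the edge-move turns $C_k$ plus a pendant bridge into $C_{k+1}$, that all unicyclic classes appear in the chain, and that strictness makes the classes distinct) are the right ones and are consistent with the paper's setup.
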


Before we move on we note one particular instance of the Theorem \ref{comb} which will be useful in the other edge-sparse graph classes as well, which essentially says that contracting a bridge and ``replacing'' it by subdividing an edge in a 2-connected block (thus keeping the same number of vertices and edges in the whole graph) will result in a better graph in the Tutte poset.

\begin{cor} \label{bridgeelim}
Let $G$ be a 2-connected graph.  Then $G \cdot K_2 \prec G'$, where $G'$ is $G$ with any edge subdivided. 
\end{cor}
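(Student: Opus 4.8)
The plan is to obtain this corollary as an immediate special case of Theorem \ref{comb}. Fix the edge $e = xy$ of $G$ that we wish to subdivide. By block-invariance of the Tutte polynomial we are free to pick a convenient representative of the Tutte class $G \cdot K_2$: I would take the one in which the pendant bridge is attached at the vertex $x$, so that the bridge $G_2 = K_2$ has vertices $x$ and a new pendant vertex $w$. Setting $G_1 = G$, $v = x$, and $u = y$, the edges $uv = xy \in E(G_1)$ and $wv = wx \in E(G_2)$ are present, so the hypotheses of Theorem \ref{comb} are met.

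First I would apply Theorem \ref{comb} to the graph $G \cdot K_2$ with this data, obtaining $G \cdot K_2 \preccurlyeq H$, where $H = (G \cdot K_2) - xy + yw$. Next I would identify $H$ with $G'$: in $H$ the vertex $w$ has exactly the two neighbours $x$ (via the surviving bridge edge) and $y$ (via the new edge $yw$), while the edge $xy$ has been deleted, so in passing from $G$ to $H$ the edge $e$ has been replaced by the path $x$--$w$--$y$, i.e. $e$ has been subdivided. Since $G$ is $2$-connected we have $G_1 = G \neq K_2$, so Theorem \ref{comb} in fact yields the strict relation $G \cdot K_2 \prec H = G'$. Finally, because $G'$ is a single $2$-connected block, its Tutte equivalence class does not depend on which endpoint of $e$ we chose for the bridge attachment, and since $e$ was an arbitrary edge of $G$ this gives $G \cdot K_2 \prec G'$ for any edge subdivision $G'$ of $G$, as claimed.

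I do not expect any real obstacle here; the only point requiring care is the identification $H = G'$ — that is, recognising that the edge move $uv \mapsto uw$ of Theorem \ref{comb}, performed when the second block $G_2$ is a pendant edge $vw$, is exactly the same operation as subdividing the edge $uv$. Once that observation is made, the corollary is a one-line consequence of the already-established theorem, with the strictness coming for free from the $G_1 \neq K_2$ clause and the $2$-connectedness hypothesis.
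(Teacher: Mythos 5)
Your proposal is correct and is essentially identical to the paper's own proof: both apply Theorem \ref{comb} with $G_1 = G$ and $G_2 = K_2$, using block-invariance to place the pendant bridge at an endpoint of the edge to be subdivided, and both obtain strictness from the clause $G_1 \neq K_2$. Your extra care in verifying that the resulting graph $H$ really is the subdivision $G'$ is a welcome but inessential elaboration of the same argument.
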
  

\begin{proof}
Let $uv$ be any edge of $G$.  Since the Tutte polynomial is block-invariant, we may assume that the bridge is incident with $v$, and call its non-$v$ vertex $w$.  Applying Theorem \ref{comb} with $G_1 = G$ and $G_2 = K_2$ now gives the result.
\end{proof}

\section{Tutte polynomial posets for $\mathcal{G}_{n,n+1}$}

An important family of graphs for this section and the next one are the generalized theta graphs.  
A \textit{generalized theta graph} $\theta(a_1,\dots,a_k)$ is the graph consisting of two vertices connected by $k$ vertex-independent paths, each $a_i$ denoting the length in edges of the $i^{th}$ path.  A generalized theta graph with $k=3$ is often called a \textit{theta graph} $\theta(a,b,c)$. (Theta graphs look something like the letter $\theta$).  

Theta graphs are the only 2-connected graphs in $\mathcal{G}_{n,n+1}$ \cite{bbst85}.  This fact effectively makes them the only candidates for maximal graphs in the class, which we note in the next theorem.

\begin{lem} \label{thet}
For every $G \in \mathcal{G}_{n,n+1}$ there is a theta graph $H$ such that $G \preccurlyeq H$.
\end{lem}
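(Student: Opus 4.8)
The plan is to take an arbitrary $G \in \mathcal{G}_{n,n+1}$ and repeatedly apply Theorem~\ref{comb} (and Corollary~\ref{bridgeelim}) to push all of its structure into a single $2$-connected block, which by the cited fact \cite{bbst85} must be a theta graph. First I would analyze the block structure of $G$: since $G$ is connected with exactly one more edge than a tree, it has exactly one block containing a cycle, and that block $B$ has cyclomatic number $2$ (i.e.\ $|E(B)| = |V(B)|+1$); all remaining blocks are bridges. If $B$ is $2$-connected, it is already a theta graph (possibly with $a_i=0$ collapsed, but as a block on $\ge 3$ vertices with $|E|=|V|+1$ and no cutvertex it is $\theta(a,b,c)$ for suitable $a,b,c\ge 1$), so the only work is to absorb the bridges; if $B$ is not $2$-connected, I first need to fix it up.

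The main steps, in order. \emph{Step 1:} Eliminate every bridge. As long as $G$ has a bridge $K_2$ and a non-$K_2$ block $G_1$ sharing a cutvertex with it (which exists because the cyclic block is not a $K_2$, and any bridge can be rerouted so as to be incident to it after enough applications — more carefully, pick a cutvertex $v$ on the cyclic block incident to an edge leading toward the bridge, and apply Corollary~\ref{bridgeelim}), apply Theorem~\ref{comb} with $G_1$ the cyclic block and $G_2$ the bridge: subdividing an edge of $G_1$ replaces $G_1 \cdot K_2$ by a strictly larger graph in the poset with one fewer bridge and the cyclic block still having cyclomatic number $2$. Iterating, we reach a graph $G'$ with $G \preccurlyeq G'$ whose only block is a single block $B'$ with $|E(B')| = |V(B')|+1$. \emph{Step 2:} If $B'$ is already $2$-connected it is a theta graph and we are done. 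Otherwise $B'$ has a cutvertex, so it decomposes into blocks each of cyclomatic number $0$ or $1$, i.e.\ it is a union of cycles and bridges glued at cutvertices — but wait, this contradicts $B'$ being a single block; so in fact once there are no bridges, a connected graph with $|E|=|V|+1$ whose blocks are all non-bridges must consist of blocks whose cyclomatic numbers sum to $2$, meaning either one block of cyclomatic number $2$ (done, theta) or two blocks each a cycle. \emph{Step 3:} In the two-cycle case $G' = C_a \cdot C_b$, apply Theorem~\ref{comb} once more with $G_1 = C_a$, $G_2 = C_b$: since $C_a \neq K_2$, we get $C_a \cdot C_b \prec H$ where $H = C_a \cdot C_b - uv + uw$ merges the two cycles at an edge, producing a theta graph $\theta(1, a-1, b-1)$. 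This $H$ is a theta graph, and $G \preccurlyeq G' \prec H$ gives the claim.

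The step I expect to require the most care is \emph{Step 1}, specifically justifying that every bridge can be made incident to the cyclic (or eventually theta) block so that Theorem~\ref{comb} applies with $G_1$ the large block. The clean way is to induct on the number of bridges and observe that in any tree of blocks, some leaf block is a bridge $K_2$ whose cutvertex $v$ lies on some other block $G_2$; if $G_2$ is itself a bridge, the edge $wv \in E(G_2)$ is still a valid choice in Theorem~\ref{comb}, and the move $G - uv + uw$ merges two bridges into a path block which Corollary~\ref{bridgeelim}-type reasoning (or simply iterating) eventually feeds into the cyclic block — alternatively, one just notes that Theorem~\ref{comb} never requires $G_2 \neq K_2$, so we may always pick $G_1$ to be the (unique) cyclic block and $G_2$ any block sharing its cutvertex, peeling bridges off the cycle's immediate neighborhood and re-attaching them as subdivisions, strictly ascending in the poset each time since $G_1 \neq K_2$. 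Since the total edge count is fixed and the number of bridges strictly decreases, the process terminates. The only genuinely routine parts are the cyclomatic-number bookkeeping and the identification of the resulting one-block graphs as theta graphs, which follow from the definitions.
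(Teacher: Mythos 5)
Your proof is correct and takes essentially the same route as the paper's: both arguments reduce to the observation that a bridgeless graph in $\mathcal{G}_{n,n+1}$ is either a theta graph or two cycles sharing a cutvertex, using Corollary~\ref{bridgeelim} to absorb bridges and Theorem~\ref{comb} to merge the two-cycle case into a theta graph, the only difference being that the paper phrases this as ``a maximal element above $G$ must be a theta graph'' while you build the ascending chain explicitly. Two harmless slips: a graph in this class need not have a single cycle-containing block (you correct this yourself in Step~2), and the merged graph in Step~3 is $\theta(a,1,b-1)$ rather than $\theta(1,a-1,b-1)$, which does not affect the conclusion.
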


\begin{proof}
Let $H$ be a maximal graph in the Tutte poset.  By Corollary \ref{bridgeelim} no bridges are present in $H$.
Thus, given $H$ has $n+1$ edges, $H$ must consist of either a theta graph or two cycles sharing a cutvertex.  However if two cycles appear as blocks in $H$, say $C_a \cdot C_b$, then applying Theorem \ref{comb} with say, $G_1=C_a$ and $G_2=C_b$ we obtain the theta graph $\theta(a,1,b-1)$ which is larger in the poset, contradicting maximality.
\end{proof} 

With regard to maximum graphs in this class then it is only a question of which theta graph is maximum, or if there might exist multiple maximal theta graphs.  Note that generalized theta graphs may be thought of as consisting of \textit{parallel ears}, that is, ears that share common endvertices.  The next theorem then, which shows that making parallel ears ``more equal'' in length will produce a better graph in the Tutte poset, will help settle that issue.

\begin{thm} \label{par}
Let $G$ be a graph containing two parallel ears $E_a, E_b$ with common endvertices $x,y$, with $a-1 > b$.  Let $H$ be the same graph but with the parallel ears $E_a,E_b$ replaced with parallel ears $E_{a-1},E_{b+1}$.  Then $G \prec H$.
\end{thm}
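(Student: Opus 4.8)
The plan is to mirror the strategy used in the proof of Theorem~\ref{comb}: first reduce to the case where $G$ consists entirely of the two parallel ears $E_a, E_b$ (plus possibly a residual edge structure), then decompose both Tutte polynomials by deleting/contracting all edges not on these ears, and finally verify the relation $\preccurlyeq$ on each resulting piece. Concretely, since the Tutte polynomial factors over blocks and $T$ has non-negative coefficients, I would first argue that it suffices to take $G$ to be the generalized theta graph $\theta(a,b)$ on the two ears alone (i.e., a single cycle of length $a+b$, viewed as two parallel ears), with $H = \theta(a-1,b+1)$; all other edges/blocks of the ambient graph contribute a common non-negative polynomial factor and can be ignored. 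This is slightly more subtle than in Theorem~\ref{comb} because the ears may have internal vertices shared with nothing else, so I need the ear-deletion/contraction apparatus rather than single-edge deletion/contraction.

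The key computational step is then to apply Theorem~\ref{e1} (the $k$-ear deletion-contraction formula) to one of the ears in each of $G$ and $H$. Applying it to the $E_a$-ear in $G$ gives
\[
T(G;x,y) = (1 + x + \dots + x^{a-1})\,T(G - E_a;x,y) + T(G/E_a;x,y),
\]
and similarly applying it to the $E_{a-1}$-ear in $H$ gives
\[
T(H;x,y) = (1 + x + \dots + x^{a-2})\,T(H - E_{a-1};x,y) + T(H/E_{a-1};x,y).
\]
After the reduction, $G - E_a$ is (a graph $T$-equivalent to) the path/ear $E_b$ with its endpoints now cut, so $T(G-E_a;x,y) = x^{b-1}$; likewise $T(H - E_{a-1};x,y) = x^{b}$; and $G/E_a$, $H/E_{a-1}$ are both the multi-edge-type block coming from collapsing one ear, so $T(G/E_a;x,y)$ and $T(H/E_{a-1};x,y)$ are each given by \eqref{cyc} applied to the remaining cycle $C_{b+1}$ resp.\ $C_b$ — that is, polynomials of the form $y + x + \dots + x^{b}$ resp.\ $y + x + \dots + x^{b-1}$. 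Substituting, $T(H;x,y) - T(G;x,y)$ becomes an explicit difference of polynomials in $x$ alone (all the $y$-dependence being a single additive $y$ that cancels), and I expect it to collapse to $(x+y-xy)$ times a polynomial with non-negative coefficients — in fact, since there is no $y$ beyond the cancelling term, it should reduce to a difference that is divisible by $(x-1)$ with non-negative quotient, and $(x-1)\cdot(\text{stuff}) = (x + y - xy)\cdot(\text{stuff})/\,$(nothing) — more precisely I will want to factor the $x$-only difference as $(x+y-xy)P(x,y)$; writing $x+y-xy = x - y(x-1) = \dots$ one checks the $x$-polynomial difference is actually $(1-y)\cdot 0 + $ a multiple of $(x-1)$, and since $(x+y-xy) = (1-y)(x-1) + 1 \cdot \dots$ hmm — the cleanest route is to observe the difference has the form $(x-1)Q(x)$ for an explicit non-negative $Q$, then use that on the relevant locus this equals $(x+y-xy)$ times something non-negative. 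I would double-check this factorization by the same kind of elementary manipulation as in Lemma~\ref{tlem1}, possibly stating and using an analogous one-line identity.

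The main obstacle I anticipate is the reduction step: justifying rigorously that one may assume $G$ is just the two-ear theta graph, since unlike Theorem~\ref{comb} the two parallel ears need not form their own block — they form a block only together with whatever connects $x$ and $y$ in the rest of $G$. The resolution is the same "delete/contract everything outside the ears" decomposition: fix an ordering of all edges of $G$ not lying on $E_a \cup E_b$, sum over all permissible $\{0,1\}$-sequences $\mathcal R$ of deletions/contractions, and check $G_R \preccurlyeq H_R$ for each $R$. After all outside edges are resolved, $x$ and $y$ have either been merged into a single vertex or separated; in the merged case $G_R, H_R$ are (up to a common block factor) exactly $\theta(a,b)$ and $\theta(a-1,b+1)$ plus loops, handled by the ear computation above; in the separated case both $G_R$ and $H_R$ become disjoint pieces that are block-isomorphic, giving difference zero. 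The strictness $G \prec H$ then follows because at least one $R \in \mathcal R$ realizes the merged case with the ears genuinely present — the hypothesis $a - 1 > b$ guarantees the parenthetical factor $(1 + x + \dots + x^{a-2}) - x^{b}\cdot(\text{something})$, equivalently the non-negative polynomial $P(x,y)$, is not identically zero.
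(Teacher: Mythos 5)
There is a genuine gap, and it sits at the heart of your ``key computational step.'' A standalone pair of parallel ears is not a theta graph in any useful sense: the graph consisting solely of $E_a$ and $E_b$ joined at $x,y$ is the cycle $C_{a+b}$, and so is the graph consisting solely of $E_{a-1}$ and $E_{b+1}$. Hence the two reduced graphs you propose to compare are $T$-equivalent and your ear computation necessarily evaluates to $0$ (indeed, redoing it with the correct value $T(G-E_a;x,y)=x^{b}$ rather than $x^{b-1}$ gives exactly $0$). The content of the theorem lives entirely in how the ears interact with the rest of the block containing them. Your delete/contract-everything-outside decomposition is a legitimate way to capture that interaction, but you have the two cases reversed: when $x$ and $y$ remain \emph{separate} in $G_R$, the ears close into a single cycle $C_{a+b}$ in both $G_R$ and $H_R$ and the difference vanishes; when $x$ and $y$ are \emph{merged}, the ears become two cycle blocks sharing a vertex, so the comparison is $C_a\cdot C_b$ versus $C_{a-1}\cdot C_{b+1}$ --- not theta graphs plus loops. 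That single inequality, $C_a\cdot C_b\prec C_{a-1}\cdot C_{b+1}$ for $a-1>b$, is the entire theorem, and you never prove it; in the paper it appears as Theorem~\ref{cycv}, which is \emph{deduced from} Theorem~\ref{par}, so invoking it here would be circular. It can be proved directly from (\ref{cyc}): one finds $T(C_{a-1}\cdot C_{b+1};x,y)-T(C_a\cdot C_b;x,y)=(x+y-xy)\,x^{b}(1+x+\dots+x^{a-b-2})$, but note that the cross terms in $y$ are essential to producing the factor $x+y-xy$ --- your claim that ``all the $y$-dependence cancels'' and that the difference is a polynomial in $x$ alone is false in precisely the case that matters, which is why your factorization discussion trails off without reaching an identity.

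For comparison, the paper's proof avoids both the full decomposition and any new algebraic identity. It deletes and contracts a single edge $e$ of $E_a$ in $G$ and a single edge $f$ of $E_{b+1}$ in $H$, observes that $G/e=H/f$ so the contraction terms cancel, and is left comparing $(G-E_a)\cdot(a-1)K_2$ with $(H-E_{b+1})\cdot bK_2$ --- two graphs with the same underlying 2-connected core but with bridges traded for subdivisions. Repeated application of Corollary~\ref{bridgeelim} then yields a strict chain from the former to the latter. If you want to salvage your route, you must (i) correctly identify the merged case as the cycle-block comparison, (ii) prove the displayed identity above by hand (a Lemma~\ref{tlem1}-style calculation), and (iii) justify that some $R$ actually merges $x$ and $y$, which requires that the rest of the block connects $x$ to $y$ outside the two ears --- the same non-degeneracy that makes the strict inequality true at all.
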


\begin{proof}
Let $e \in E(G)$ be any edge of the ear $E_a$ and let $f \in E(H)$ be any edge of the ear $E_{b+1}$.  Applying (\ref{def}) to these edges, and noting that $G/e = H/f$, we obtain 
\begin{align*}
T(H;x,y) - T(G;x,y) &= T(H-f;x,y) - T(G-e;x,y) \\
&= T((H-E_{b+1})\cdot bK_2;x,y) - T((G-E_a)\cdot (a-1)K_2;x,y).
\end{align*}
We now show $(G-E_a)\cdot (a-1)K_2 \prec (H-E_{b+1})\cdot bK_2$.
 Note that in the graph $G-E_a$ the ear $E_b$ is still present, and in $H-E_{b+1}$ the ear $E_{a-1}$ is still present, and that $(G-E_a)-E_b = (H-E_{a-1})-E_{b+1}$.   To emphasize this, we rewrite $G-E_a = G':E_b$ and $H-E_{b+1}=G':E_{a-1}$ where $G' =(G-E_a)-E_b = (H-E_{a-1})-E_{b+1}$ and $G':E$ indicates the graph $G'$ with ear $E$ connecting vertices $x,y$.  (The `colon' is intended to suggest the vertices $x,y$ of $G'$.)  

Now since $a-1>b$, the ear $E_b$ in $G':E_b$ is shorter than the ear $E_{a-1}$ in $G':E_{a-1}$, and there are more bridges in $G':E_b \cup (a-1)K_2$ than there are in $G':E_{a-1}\cup bK_2$.  Thus by repeatedly applying the operation of Corollary \ref{bridgeelim} to the graph $G':E_b \cup (a-1)K_2$ we obtain the chain of inequalities
\[(G':E_{b}) \cdot (a-1)K_2 \prec (G':E_{b+1}) \cdot (a-2)K_2\prec \dots \prec (G':E_{a-1}) \cdot bK_2\]
where $G':E_{b+1}$ indicates that the ear $E_b$ in $G':E_b$ has been lengthened by one edge. 
But $(G':E_{a-1}) \cdot bK_2 = (H-E_{b+1}) \cdot bK_2$, and thus  $(G-E_a)\cdot (a-1)K_2 \prec (H-E_{b+1}) \cdot b K_2$ as required. 
\end{proof}

The last theorem, together with Lemma \ref{thet}, immediately implies the existence of maximum theta graphs, and hence maximum graphs, in this class.

\begin{thm}  \label{thetamax}
The unique Tutte-maximum graph in the $(n,n+1)$ Tutte polynomial poset is the theta graph with path lengths as equal as possible.
\end{thm}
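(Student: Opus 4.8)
The plan is to combine Lemma~\ref{thet}, which shows that every graph in $\mathcal{G}_{n,n+1}$ lies below a theta graph in the poset, with Theorem~\ref{par}, which shows that making the three paths of a theta graph more nearly equal in length strictly increases it. Write $\theta^{\ast}$ for the theta graph in $\mathcal{G}_{n,n+1}$ whose three path lengths are as equal as possible; since those lengths are positive integers summing to $n+1$, the multiset of lengths is uniquely determined, so $\theta^{\ast}$ is well defined up to isomorphism, and hence up to $T$-equivalence.

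First I would note that $\mathcal{G}_{n,n+1}$ contains only finitely many theta graphs, so the induced sub-poset has a maximal element; let $\theta(a_1,a_2,a_3)$ with $a_1\le a_2\le a_3$ be one. If its lengths are not as equal as possible then $a_3\ge a_1+2$, i.e.\ $a_3-1>a_1$, so viewing the paths of lengths $a_3$ and $a_1$ as parallel ears and applying Theorem~\ref{par} yields the strictly larger theta graph $\theta(a_1+1,a_2,a_3-1)\in\mathcal{G}_{n,n+1}$, a contradiction. Hence every maximal theta graph has all three path lengths within $1$ of one another, and there is exactly one such graph, namely $\theta^{\ast}$; so $\theta^{\ast}$ is the unique maximal theta graph. (Alternatively, one can iterate Theorem~\ref{par}: each application leaves $a_1^2+a_2^2+a_3^2$ strictly smaller while it stays bounded below, so after finitely many steps one reaches a theta graph admitting no further such move, which must be $\theta^{\ast}$; since each step is a strict $\prec$ and $\prec$ is transitive, the original theta graph is $\prec\theta^{\ast}$.)

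Next, because every element of a finite poset lies below some maximal element, every theta graph $H\in\mathcal{G}_{n,n+1}$ satisfies $H\preccurlyeq\theta^{\ast}$. Finally, given an arbitrary $G\in\mathcal{G}_{n,n+1}$, Lemma~\ref{thet} provides a theta graph $H$ with $G\preccurlyeq H$, whence $G\preccurlyeq H\preccurlyeq\theta^{\ast}$, so $\theta^{\ast}$ is Tutte-maximum. Uniqueness is immediate: if $H'$ is also Tutte-maximum then $\theta^{\ast}\preccurlyeq H'$ and $H'\preccurlyeq\theta^{\ast}$, which forces $T(H';x,y)=T(\theta^{\ast};x,y)$, so $H'$ lies in the $T$-equivalence class of $\theta^{\ast}$.

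I do not expect a serious obstacle here: the substance is already carried by Lemma~\ref{thet} (the structural reduction to theta graphs) and Theorem~\ref{par} (the equalizing move), and what remains is a short finiteness argument together with the observation that splitting $n+1$ into three nearly equal parts is unambiguous. The only points needing a little care are routine: checking that the theta graphs produced by the equalizing moves keep all path lengths positive and the vertex and edge counts unchanged, so that they remain in $\mathcal{G}_{n,n+1}$; and phrasing the uniqueness of the maximum as uniqueness of a $T$-equivalence class, since the poset is defined on such classes.
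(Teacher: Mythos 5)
Your proposal is correct and follows exactly the paper's route: Lemma~\ref{thet} reduces the problem to theta graphs and Theorem~\ref{par} (the equalizing move on parallel ears) forces any maximal theta graph to have path lengths within one of each other, which determines it uniquely. The finiteness/potential-function details and the remark that uniqueness is up to $T$-equivalence are reasonable elaborations of what the paper leaves implicit in its one-line deduction.
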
 

We make note here of a variation on Theorem \ref{par} that will be useful in the next section.  Letting $x=y$ in that theorem we have the following, which says essentially that making the lengths of two cycle blocks ``more equal'' produces a better graph.  (This is also apparent via Theorem \ref{cyc} and some tedious algebra, but is immediate via Theorem \ref{par}.)

\begin{thm} \label{cycv}
Let $C_a \cdot C_b$ be such that $a- 1>b$.  Then $C_a \cdot C_b \prec C_{a-1}\cdot C_{b+1}$.
\end{thm}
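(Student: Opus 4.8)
The plan is to run the same argument as the proof of Theorem~\ref{par}, but with the two common endvertices of a pair of parallel ears collapsed to a single vertex; once the endpoints are identified, parallel ears $E_a,E_b$ are precisely the two cycle blocks $C_a,C_b$ meeting at that cutvertex, so Theorem~\ref{cycv} is exactly the ``$x=y$'' case of Theorem~\ref{par}. Explicitly, set $G=C_a\cdot C_b$ and $H=C_{a-1}\cdot C_{b+1}$, and fix an edge $e$ in the $C_a$-block of $G$ and an edge $f$ in the $C_{b+1}$-block of $H$; neither is a bridge or a loop. Contracting $e$ shortens the $a$-cycle to $C_{a-1}$ and contracting $f$ shortens the $(b+1)$-cycle to $C_b$, so $G/e=C_{a-1}\cdot C_b=H/f$; and deleting $e$ turns the $a$-cycle into a pendant path of $a-1$ bridges while deleting $f$ turns the $(b+1)$-cycle into a pendant path of $b$ bridges, so by block-invariance (\ref{fact}), $G-e$ has the Tutte polynomial of $C_b\cdot(a-1)K_2$ and $H-f$ that of $C_{a-1}\cdot bK_2$. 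Applying (\ref{def}) and cancelling the contraction terms therefore yields
\[
T(H;x,y)-T(G;x,y)=T\bigl(C_{a-1}\cdot bK_2;x,y\bigr)-T\bigl(C_b\cdot(a-1)K_2;x,y\bigr).
\]

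Both graphs on the right lie in the unicyclic class $\mathcal{G}_{a+b-1,\,a+b-1}$, whose poset is the chain of Theorem~\ref{gnn}, ordered by the length of the cycle block (equivalently, one climbs the chain via Corollary~\ref{bridgeelim}). Since the hypothesis $a-1>b$ says exactly that the cycle block of $C_b\cdot(a-1)K_2$ is strictly shorter than that of $C_{a-1}\cdot bK_2$, Theorem~\ref{gnn} gives $C_b\cdot(a-1)K_2\prec C_{a-1}\cdot bK_2$, i.e. $T(H;x,y)-T(G;x,y)=(x+y-xy)P(x,y)$ with $P$ a nonzero polynomial with non-negative coefficients. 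Hence $C_a\cdot C_b\prec C_{a-1}\cdot C_{b+1}$.

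As a cross-check, the direct ``tedious algebra'' route works too and pins down $P$ explicitly: writing $s_n=y+x+\cdots+x^{n-1}$, so that $T(C_a\cdot C_b)=s_as_b$ by (\ref{fact}) and (\ref{cyc}), and using $s_a=s_{a-1}+x^{a-1}$, $s_{b+1}=s_b+x^b$, one gets $s_{a-1}s_{b+1}-s_as_b=x^bs_{a-1}-x^{a-1}s_b$, which telescopes to
\[
T(C_{a-1}\cdot C_{b+1})-T(C_a\cdot C_b)=(x+y-xy)\bigl(x^b+x^{b+1}+\cdots+x^{a-2}\bigr),
\]
the parenthesized factor being a nonzero $0/1$-polynomial precisely when $a-1>b$. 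A third, equivalent route is to note $T(\theta(a,b,1))=T(C_{a+b})+T(C_a\cdot C_b)$ (delete/contract the single chord) and likewise $T(\theta(a-1,b+1,1))=T(C_{a+b})+T(C_{a-1}\cdot C_{b+1})$, reducing the claim to $\theta(a,b,1)\prec\theta(a-1,b+1,1)$, which is immediate from Theorem~\ref{par}.

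There is no real obstacle of ideas here — the statement is essentially a corollary of Theorem~\ref{par} together with the already-resolved $\mathcal{G}_{n,n}$ poset — only a little bookkeeping: one must confirm that removing a single edge of a cycle block leaves the block structure ``shorter cycle plus bridges'' (so that (\ref{fact}) and the $\mathcal{G}_{n,n}$ chain apply), and one should note the degenerate small case $b=1$, where $C_1$ is a loop block rather than a genuine cycle; there the $\mathcal{G}_{n,n}$-chain phrasing needs minor care, but the explicit $P(x,y)$ above still applies verbatim, so nothing new is required.
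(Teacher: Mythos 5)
Your proposal is correct and takes essentially the same route as the paper, which derives Theorem~\ref{cycv} as the degenerate ``identified endpoints'' case of Theorem~\ref{par} and remarks that it also follows from (\ref{cyc}) by direct algebra; you carry out both of these (your explicit factor $P(x,y)=x^b+\cdots+x^{a-2}$ checks out and neatly handles the multigraph cases $b\in\{1,2\}$ where the $\mathcal{G}_{n,n}$-chain phrasing of Theorem~\ref{gnn} does not literally apply). No gaps.
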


Finally, we note the increasing complexity of the Tutte polynomial posets in $\mathcal{G}_{n,n+1}$ as compared to $\mathcal{G}_{n,n}$.  Unlike in the previous $(n,n)$ class the $(n,n+1)$ Tutte polynomial posets are typically not chains.  The smallest non-chain poset (obtained using \textit{Mathematica}) is the $(6,7)$ Tutte polynomial poset which appears in Figure \ref{poset67} at the end of the paper.  In that poset the graph $\theta(2,2,2) \cdot K_2$ is incomparable with both the graphs $\theta(1,2,4)$ and $C_3 \cdot C_4$.  Based on our computations, the Tutte posets in this $\mathcal{G}_{n,n+1}$ class appear to rapidly increase in complexity as $n$ increases.

\section{Tutte polynomial posets for $\mathcal{G}_{n,n+2}$}

As in the previous class, it is important to first identify which 2-connected graphs are possible in this class.  These were determined in \cite{ls24}, where it was shown there are four general families of 2-connected graphs here:  the generalized theta graphs with four ears $\theta(a,b,c,d)$, and graph families which we will call the delta graphs $\Delta(a,b,c,d,e)$, the box graphs $B(a,b,c,d,e,f)$, and the cylinder graphs $C(a,b,c,d,e,f)$.  Generalized theta graphs have been defined in the previous section.  The form of the delta graph, box graph, and cylinder graph families are pictured in Figure \ref{fig2}.    The formal definitions of the these families are as follows.  We say that a graph is a \textit{delta graph} $\Delta(a,b,c,d,e)$ if it has three vertices, say $x,y,z$, with one ear of length $e$ between the pair of vertices $(x,y)$, and two parallel ears of lengths $a,b$ (resp. $c,d$) in between the vertex pair $(x,z)$ (resp. $(y,z)$).  We say a graph is a \textit{box graph} $B(a,b,c,d,e,f)$ if it has four vertices, say $u,v,x,y$, with single ears of lengths $a,b,c,d,e,f$ between vertex pairs $(u,v)$, $(x,y)$, $(u,x)$, $(v,y)$, $(v,x)$, and $(u,y)$ respectively.  Finally, we say that a graph is a \textit{cylinder graph} $C(a,b,c,d,e,f)$ if it has four vertices, say $u,v,x,y$, with single ears of length $e,f$ between vertex pairs $(u,v)$ and $(x,y)$, and two parallel ears of lengths $a,b$ and $c,d$ respectively between vertex pairs $(u,x)$ and $(v,y)$.

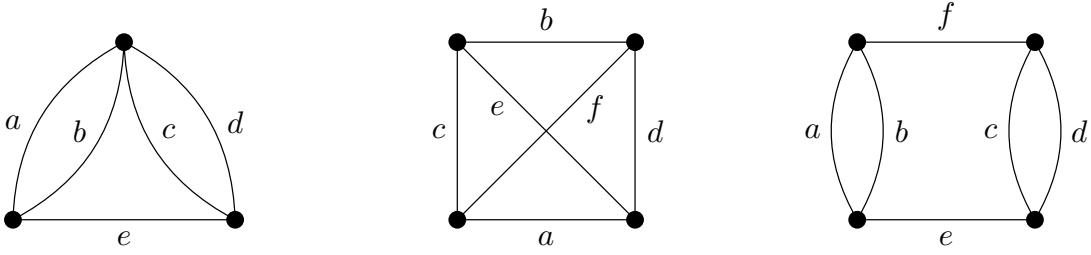
\begin{figure} 
\begin{center}
\resizebox{15cm}{!}{
\begin{tikzpicture}

\fill (0,0) node[black,right] {} circle (0.1cm);
\fill (2.5,0) node[black,right] {} circle (0.1cm);
\fill (1.25,2) node[black,right] {} circle (0.1cm);

\draw (0,0) -- (2.5,0)  node [midway, below] {\small $e$};
\draw (0,0) to [bend right=30] (1.25,2); 
\path (0,0) edge [bend left] (1.25,2); 
\path (2.5,0) edge [bend right] (1.25,2); 
\path (2.5,0) edge [bend left] (1.25,2); 
\node at (0,1.1) {\small $a$};
\node at (2.5,1.1) {\small $d$};
\node at (0.75,1) {\small $b$};
\node at (1.75,1) {\small $c$};

\begin{scope}[xshift=5cm]
\fill (0,0) node[black,right] {} circle (0.1cm);
\fill (2,0) node[black,right] {} circle (0.1cm);
\fill (2,2) node[black,right] {} circle (0.1cm);
\fill (0,2) node[black,right] {} circle (0.1cm);

\draw (0,0) -- (2,0)  node [midway, below] {\small $a$};
\draw (2,0) -- (2,2)  node [midway, right] {\small $d$};
\draw (2,2) -- (0,2)  node [midway, above] {\small $b$};
\draw (0,0) -- (0,2)  node [midway, left] {\small $c$};
\draw (0,0) -- (2,2);
\draw (2,0) -- (0,2);
\node at (0.45,1.25) {\small $e$};
\node at (1.55,1.25) {\small $f$};

\end{scope}

\begin{scope}[xshift=9.5cm]

\fill (0,0) node[black,right] {} circle (0.1cm);
\fill (2,0) node[black,right] {} circle (0.1cm);
\fill (2,2) node[black,right] {} circle (0.1cm);
\fill (0,2) node[black,right] {} circle (0.1cm);

\draw (0,0) -- (2,0)  node [midway, below] {\small $e$};
\draw (0,2) -- (2,2)  node [midway, above] {\small $f$};
\draw (0,0) to [bend right=30] (0,2); 
\draw (0,0) to [bend left=30] (0,2); 
\draw (2,0) to [bend right=30] (2,2); 
\draw (2,0) to [bend left=30] (2,2); 
\node at (-0.5,1) {\small $a$};
\node at (1.5,1) {\small $c$};
\node at (0.5,1) {\small $b$};
\node at (2.5,1) {\small $d$};

\end{scope}

  \end{tikzpicture}
}
\end{center}
\caption{Delta graphs $\Delta(a,b,c,d,e)$, box graphs $B(a,b,c,d,e,f)$, and cylinder graphs $C(a,b,c,d,e,f)$.  Each labeled line indicates a path with length in edges equal to the label.} 
\label{fig2}
\end{figure}

We now show that the box graphs  play a similar role in $\mathcal{G}_{n,n+2}$ as the theta graphs did in $\mathcal{G}_{n,n+1}$ in that, if $G \in \mathcal{G}_{n,n+2}$ is any other type of graph then there will be a box graph $H$ such that $G \prec H$.  From Theorem \ref{comb} we know that any maximal graphs in the Tutte poset must be 2-connected, so to prove that box graphs are the maximal graphs here it suffices to show that for each generalized theta graph $\theta(a,b,c,d)$, each delta graph $\Delta(a,b,c,d,e)$, and each cylinder graph $C(a,b,c,d,e,f)$ there is a box graph $B(a,b,c,d,e,f)$ that is larger in the poset.  That is accomplished in the next three theorems.

\begin{thm}
Let $G$ be a generalized theta graph $\theta(a,b,c,d)$ with $a = \min\{a,b,c,d\}$.  Let $H$ be the box graph $B(1,1,b-1,c-1,a,d)$.  Then $G \prec H$.
\end{thm}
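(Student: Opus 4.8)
The plan is to apply Theorem~\ref{e1} to a shortest ear of each of $G$ and $H$ and subtract. Let $E_a$ be the length-$a$ ear of $G=\theta(a,b,c,d)$; since $G$ is $2$-connected its edges are non-bridges, so Theorem~\ref{e1} gives $T(G)=(1+x+\cdots+x^{a-1})T(G-E_a)+T(G/E_a)$, where $G-E_a=\theta(b,c,d)$ and $G/E_a$ (obtained by merging the two branch vertices) has blocks $C_b,C_c,C_d$. In $H=B(1,1,b-1,c-1,a,d)$ let $E_a$ be the length-$a$ ear, which joins two of the four ``corner'' vertices; its edges are non-bridges as well. Deleting $E_a$ turns the other two corners into degree-$2$ vertices lying on paths of lengths $b$ and $c$, so $H-E_a=\theta(b,c,d)$ too, while merging the two endpoints of $E_a$ collapses $H$ onto the delta graph $\Delta(1,b-1,1,c-1,d)$. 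Subtracting, the $(1+x+\cdots+x^{a-1})$-terms cancel and
\[ T(H)-T(G)=T\bigl(\Delta(1,b-1,1,c-1,d)\bigr)-T(C_b)T(C_c)T(C_d),\]
so it suffices to prove $C_b\cdot C_c\cdot C_d\prec\Delta(1,b-1,1,c-1,d)$. (Note $b,c,d\ge 2$, since $a$ is the smallest of the four path lengths and $G$ is simple.)

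To evaluate the delta graph I would peel off more ears. Theorem~\ref{e1} on its length-$d$ ear gives $T\bigl(\Delta(1,b-1,1,c-1,d)\bigr)=(1+x+\cdots+x^{d-1})T(C_b)T(C_c)+T\bigl(\theta(1,b-1,1,c-1)\bigr)$ (deleting that ear leaves $C_b$ and $C_c$ glued at a vertex; contracting it gives the four-ear theta graph $\theta(1,b-1,1,c-1)$), and two further deletion--contractions on the two length-$1$ ears give $T\bigl(\theta(1,b-1,1,c-1)\bigr)=T(C_{b+c-2})+(1+y)T(C_{b-1})T(C_{c-1})$. Using the identities $(1+x+\cdots+x^{d-1})-T(C_d)=1-y$ and $T(C_{k+1})=T(C_k)+x^{k}$, the difference $T(\Delta)-T(C_bC_cC_d)$ becomes $2T(C_{b-1})T(C_{c-1})+(1-y)\bigl(x^{b-1}T(C_{c-1})+x^{c-1}T(C_{b-1})+x^{b+c-2}\bigr)+T(C_{b+c-2})$. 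The further identity $(1-y)x^{N}+T(C_{N})=(x+y-xy)(1+x+\cdots+x^{N-1})$ (a transpose of Lemma~\ref{tlem1} with $a=N+1$, $b=1$), applied repeatedly, telescopes this expression to
\[ T\bigl(\Delta(1,b-1,1,c-1,d)\bigr)-T(C_b)T(C_c)T(C_d)=(x+y-xy)\Bigl([b{+}c{-}2]+[c{-}1]\,T(C_{b-1})+[b{-}1]\,T(C_{c-1})\Bigr),\]
where $[k]$ abbreviates $1+x+\cdots+x^{k-1}$ (with $[0]=0$).

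The bracketed polynomial $[b{+}c{-}2]+[c{-}1]T(C_{b-1})+[b{-}1]T(C_{c-1})$ has non-negative coefficients and constant term $1$, hence is non-zero; so $C_b\cdot C_c\cdot C_d\prec\Delta(1,b-1,1,c-1,d)$, and therefore $G\prec H$, as claimed. I expect the step requiring the most care is the algebraic collapse in the previous paragraph --- in particular, organizing the repeated use of $(1-y)x^{N}+T(C_{N})=(x+y-xy)[N]$ (peeling one copy of $x^{b-1}$, one of $x^{c-1}$, and one of $x^{b+c-2}$ in turn) so that the quotient emerges manifestly non-negative, and checking that the small cases $a=1$, $b=2$, $c=2$, where some of the sums $[k]$ are empty, cause no trouble. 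The structural part --- the two applications of Theorem~\ref{e1} and the identification of $H-E_a$, $H/E_a$, and the pieces of $\Delta(1,b-1,1,c-1,d)$ --- is routine.
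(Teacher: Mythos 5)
Your proof is correct. The opening move is exactly the paper's: apply Theorem~\ref{e1} to the ear $E_a$ in both $G$ and $H$, observe that $G-E_a=H-E_a=\theta(b,c,d)$ so those terms cancel, and reduce the claim to $C_b\cdot C_c\cdot C_d\prec\Delta(1,b-1,1,c-1,d)$ (the paper writes this delta graph as $\Delta(b-1,1,1,c-1,d)$; it is the same graph). Where you genuinely diverge is the finish. The paper settles $C_b\cdot C_c\cdot C_d\prec\Delta(b-1,1,1,c-1,d)$ in two lines by invoking Theorem~\ref{comb} twice --- first merging the blocks $C_d$ and $C_b$ into $\theta(b-1,1,d)$, then merging that block with $C_c$ to form the delta graph --- so no further algebra is needed. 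You instead compute the difference explicitly via two more ear decompositions and the identity $(1-y)x^{N}+T(C_N)=(x+y-xy)(1+x+\cdots+x^{N-1})$; I checked the algebra, including the boundary cases $b=2$ or $c=2$ where $T(C_1)=y$, and it telescopes as you claim to $(x+y-xy)$ times the manifestly non-negative polynomial you display, whose constant term is $1$ since $b+c-2\ge 1$, so it is non-zero and $G\prec H$ follows. Your route costs more computation but is self-contained and produces an explicit formula for the quotient $P(x,y)$; the paper's route is shorter and leans on the structural block-combining theorem that powers the rest of the section, which you could equally well have cited at this point.
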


\begin{proof}
We note that, since $G$ is assumed to be simple and $a$ is the minimum path length in $G$ then $b,c \ge 2$ and $H$ is well-defined.  Now, letting $E_a$ denote the ear of length $a$ in both $G$ and $H$, by construction we have $G-E_a = H-E_a = \theta(b,c,d)$.  Therefore, applying Theorem \ref{e1} to the ear $E_a$ in both $G$ and $H$, we obtain
\begin{align*} 
T(H;x,y) - T(G;x,y) &= T(H/E_a;x,y) - T(G/E_a;x,y) \\
&= T(\Delta(b-1,1,1,c-1,d);x,y) -T(C_b \cdot C_c \cdot C_d;x,y)
\end{align*}
Hence it suffices to show that $C_b \cdot C_c \cdot C_d\prec \Delta(b-1,1,1,c-1,d)$.  Now in $C_b \cdot C_c \cdot C_d$ let $e$ be an edge of $C_d$ and $f$ be an edge of $C_b$.  Applying Theorem \ref{comb} with $G_1=C_d$, $G_2 = C_b$, with $uv=e$, $vw=f$ we obtain
\[C_b \cdot C_c \cdot C_d \prec \theta(b-1,1,d) \cdot C_c.\]   
Now in $\theta(b-1,1,d) \cdot C_c$ let $e$ be an edge of the ear $E_d$ in $\theta(b-1,1,d)$ and $f$ an edge of $C_c$.  Now applying Theorem \ref{comb} again with $G_1 = \theta(b-1,1,d), G_2 = C_c$, with $uv=e$ and $vw = f$ we obtain
\[\theta(b-1,1,d) \cdot C_c \prec \Delta(b-1,1,1,c-1,d).\]
Thus $C_b \cdot C_c \cdot C_d\prec\Delta(b-1,1,1,c-1,d)$, and so $G \prec H$ as desired.
\end{proof}

\begin{thm} \label{del}
Let $G$ be the delta graph $\Delta(a,b,c,d,e)$ with $d = \max\{a,b,c,d\}$.  Let $H$ be the box graph $B(1,e,a,c,b,d-1)$.  Then $G \prec H$.
\end{thm}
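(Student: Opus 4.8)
The plan is to strip off the ear of length $c$ from both $G$ and $H$ by means of Theorem \ref{e1}, which reduces the comparison $G\prec H$ to two smaller ones: one settled by Theorem \ref{par}, the other by Theorem \ref{comb}. First note that $G$ and $H$ are both $2$-connected, since a delta graph or box graph is $2$-connected as soon as all of its ear lengths are at least $1$; moreover, because the two parallel ears of lengths $c,d$ of $G$ cannot both be single edges ($G$ being simple), we have $\max\{c,d\}\ge 2$, hence $d=\max\{a,b,c,d\}\ge 2$, so $d-1\ge 1$ and $H=B(1,e,a,c,b,d-1)$ is a well-defined box graph with the same number of vertices and edges as $G$.

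Let $E_c$ denote the ear of $G$ of length $c$ (one of the two parallel ears of lengths $c$ and $d$), and, labelling the vertices of $H$ by $u,v,x,y$ as in the definition of a box graph, let $E_c$ also denote the ear of length $c$ between $v$ and $y$ in $H$. Since $G$ and $H$ are $2$-connected the edges of these ears are non-bridges, so Theorem \ref{e1} applies to $E_c$ in each graph and, on subtracting, gives
\begin{align*}
T(H;x,y)-T(G;x,y)&=(1+x+\dots+x^{c-1})\bigl(T(H-E_c;x,y)-T(G-E_c;x,y)\bigr)\\
&\qquad{}+\bigl(T(H/E_c;x,y)-T(G/E_c;x,y)\bigr).
\end{align*}
A direct structural check --- delete or contract the $c$-ear and simplify the resulting subdivided graph, re-reading the newly created degree-two vertices as internal vertices --- identifies the four graphs on the right: $G-E_c=\theta(a,b,d+e)$, $H-E_c=\theta(a,b+1,d+e-1)$, $G/E_c=\theta(a,b,e)\cdot C_d$ and $H/E_c=\Delta(1,d-1,b,e,a)$.

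For the first bracket I will use Theorem \ref{par}. Since $d=\max\{a,b,c,d\}\ge b$ and $e\ge 1$ we have $d+e-1\ge b$; when $d+e-1=b$ the two theta graphs have the same multiset of ear lengths, so the difference is $0$, and when $d+e-1>b$ a single application of Theorem \ref{par} to the parallel ears $E_{d+e}$ and $E_b$ gives $\theta(a,b,d+e)\prec\theta(a,b+1,d+e-1)$. Either way $T(H-E_c;x,y)-T(G-E_c;x,y)=(x+y-xy)Q_1$ for a polynomial $Q_1$ with non-negative coefficients. For the second bracket I will use Theorem \ref{comb}: in $\theta(a,b,e)\cdot C_d$ take $G_1=\theta(a,b,e)$, $G_2=C_d$ with cutvertex $v$, and move the edge $uv$ lying on the length-$a$ ear of $G_1$ (so $u$ is the neighbour of $v$ on that ear) to the edge $uw$, where $w$ is a neighbour of $v$ on $C_d$; a short computation of the resulting block shows it to be exactly $\Delta(1,d-1,b,e,a)$. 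Since $G_1=\theta(a,b,e)\ne K_2$, Theorem \ref{comb} then gives $\theta(a,b,e)\cdot C_d\prec\Delta(1,d-1,b,e,a)$, that is, $T(H/E_c;x,y)-T(G/E_c;x,y)=(x+y-xy)Q_2$ with $Q_2$ having non-negative coefficients and $Q_2\ne 0$. Substituting back, $T(H;x,y)-T(G;x,y)=(x+y-xy)\bigl((1+x+\dots+x^{c-1})Q_1+Q_2\bigr)$, and the polynomial $(1+x+\dots+x^{c-1})Q_1+Q_2$ has non-negative coefficients and is nonzero because $Q_2\ne 0$; hence $G\prec H$.

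The one step carrying real content is the application of Theorem \ref{comb}: the argument works precisely because the single edge-move applied to $\theta(a,b,e)\cdot C_d$ lands exactly on the contracted graph $H/E_c=\Delta(1,d-1,b,e,a)$, so this coincidence is the point I would check most carefully, since a mismatch there would break the telescoping. The four graph identifications and the boundary case $d+e-1=b$ are routine but must be verified. One could instead decompose on the length-$1$ ear of $H$ together with a single edge of the length-$d$ ear of $G$, which collapses the whole statement to the single inequality $\theta(a,b,c+e)\cdot(d-1)K_2\prec\theta(a+d-1,b+c,e)$; I prefer the $c$-ear decomposition, however, since it splits the difference cleanly into the two elementary pieces handled by Theorems \ref{par} and \ref{comb}.
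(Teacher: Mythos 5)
Your proof is correct, and the graph identifications $G-E_c=\theta(a,b,d+e)$, $H-E_c=\theta(a,b+1,d+e-1)$, $G/E_c=\theta(a,b,e)\cdot C_d$, $H/E_c=\Delta(1,d-1,b,e,a)$ all check out, as does the claim that the edge-move of Theorem \ref{comb} applied to $\theta(a,b,e)\cdot C_d$ produces exactly $\Delta(1,d-1,b,e,a)$. Your route differs from the paper's in the choice of decomposition: the paper first strips the ear $E_a$ (chosen so that $G-E_a=H-E_a=\theta(b+e,c,d)$, making the deletion terms cancel outright), and only then strips $E_c$ from the two contracted graphs, reducing everything to $C_b\cdot C_{d+e}\prec C_{b+1}\cdot C_{d+e-1}$ (Theorem \ref{cycv}) and $C_b\cdot C_d\cdot C_e\prec\theta(b,1,e,d-1)$ (two applications of Theorem \ref{comb}). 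You instead strip only $E_c$, accept that the deletion terms do not cancel, and dispose of them with Theorem \ref{par}; the contraction terms are then handled by a single application of Theorem \ref{comb}. Your version is somewhat more economical (one ear decomposition and one edge-move instead of two of each), and it has the additional merit of explicitly treating the boundary case $d+e-1=b$, where the two theta graphs coincide and the difference is zero --- a degenerate case the paper's corresponding step (invoking Theorem \ref{cycv}, which requires strict inequality) passes over silently, though harmlessly since the other summand is strict in both arguments. The trade-off is that the paper's reduction bottoms out in comparisons of cycle blocks and small theta graphs, whereas yours requires verifying that an edge-move lands on a specific delta graph; as you note, that coincidence is the one step that genuinely needs checking, and it does hold.
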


\begin{proof}
Note that since $G$ is assumed to be simple then $d \ge 2$ and $H$ is well-defined.  Now, letting $E_a$ denote the ear of length $a$ in both $G$ and $H$, by construction we have $G-E_a = H-E_a = \theta(b+e,c,d)$.  Hence, applying Theorem \ref{e1} to the ear $E_a$ in both $G$ and $H$, we obtain
\begin{align*} 
T(H;x,y) - T(G;x,y) &= T(H/E_a;x,y) - T(G/E_a;x,y) \\
&= T(\Delta(b,1,e,d-1,c);x,y) -T(C_b \cdot \theta(c,d,e);x,y)
\end{align*}
Now let $E_c$ denote the ear of length $c$ in both $\theta(c,d,e)$ and $\Delta(b,1,e,d-1,c)$.  Applying Theorem \ref{e1} now to the ear $E_c$ in both $\theta(c,d,e)$ and $\Delta(b,1,e,d-1,c)$ the last expression above then becomes
\begin{align*}
((1+x&+x^2+\dots+x^{c-1})T(\Delta(b,1,e,d-1,c)-E_c;x,y)+T(\Delta(b,1,e,d-1,c)/E_c;x,y)) \\&- \left((1+x+x^2+\dots+x^{c-1})T(C_b \cdot \theta(c,d,e)-E_c;x,y)+T(C_b \cdot \theta(c,d,e)/E_c;x,y)\right)\\
&= (1+x+x^2+\dots+x^{c-1})T(C_{b+1} \cdot C_{d+e-1};x,y)+ T(\theta(b,1,e,d-1);x,y)\\&-(1+x+x^2+\dots+x^{c-1})T(C_b \cdot C_{d+e};x,y)-T(C_b \cdot C_d \cdot C_e;x,y) 
\end{align*}
Hence it suffices to show that $C_b \cdot C_{d+e} \prec C_{b+1} \cdot C_{d+e-1}$ and $C_b \cdot C_d \cdot C_e \prec \theta(b,1,e,d-1)$.  However, $C_b \cdot C_{d+e} \prec C_{b+1} \cdot C_{d+e-1}$ follows directly from Theorem \ref{cycv}, so to complete the proof it only remains to show that $C_b \cdot C_d \cdot C_e \prec \theta(b,1,e,d-1)$.

To see that $C_b \cdot C_d \cdot C_e \prec \theta(b,1,e,d-1)$, we apply Theorem \ref{comb} twice.  First, let $G_1 = C_b$, $G_2 = C_d$ with $uv$ any edge of $C_b$ and $vw$ any edge of $C_d$.  By Theorem \ref{comb} we have $C_b \cdot C_d \cdot C_e \prec \theta(b,1,d-1) \cdot C_e$.  Now take $G_1 = C_e$, $G_2 = \theta(b,1,d-1)$, and $uv$ to be any edge of $C_e$ and $vw$ to be the edge of $\theta(b,1,d-1)$ that comprises the path of length 1.  By Theorem \ref{comb} we arrive at $\theta(b,1,d-1) \cdot C_e \prec \theta(b,1,e,d-1)$.  Therefore $C_b \cdot C_d \cdot C_e \prec \theta(b,1,e,d-1)$ as required.
\end{proof}

\begin{thm}
Let $G$ be the cylinder graph $C(a,b,c,d,e,f)$ with $d = \max\{a,b,c,d\}$.  Let $H$ be the box graph $B(1,e+f,a,c,b,d-1)$.  Then $G \prec H$.
\end{thm}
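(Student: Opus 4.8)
The plan is to follow the template of the two preceding theorems: select a single ear of $G$ that also occurs inside the target box graph $H$, apply the ear decomposition of Theorem~\ref{e1} to that ear in both $G$ and $H$, and thereby reduce the comparison $T(H;x,y)-T(G;x,y)$ to a comparison of two strictly smaller graphs that can be handled by material already in hand (Theorems~\ref{comb}, \ref{cycv}, and the argument inside Theorem~\ref{del}). I would label the four branch vertices of $G=C(a,b,c,d,e,f)$ as $u,v,x,y$ as in the definition, so the length-$a$ and length-$b$ ears join $u,x$, the length-$c$ and length-$d$ ears join $v,y$, the length-$e$ ear joins $u,v$, and the length-$f$ ear joins $x,y$; since $G$ is simple and $d=\max\{a,b,c,d\}$ we get $d\ge 2$, so $H=B(1,e+f,a,c,b,d-1)$ has all ear lengths at least $1$ and is well-defined.

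Take $E_a$ to be the length-$a$ ear of $G$ and, likewise, the length-$a$ ear of $H$ (the one joining the two box vertices $u,x$ of $B(1,e+f,a,c,b,d-1)$). Checking incidences, $G-E_a$ and $H-E_a$ are each the generalized theta graph $\theta(c,d,b+e+f)$ on the vertex pair $(v,y)$ (in $H$ the three parallel $v$–$y$ paths are the direct one of length $c$, the one of length $b+(e+f)$ through $x$, and the one of length $1+(d-1)=d$ through $u$). Hence Theorem~\ref{e1} applied to $E_a$ in each graph gives
\[ T(H;x,y)-T(G;x,y) = T(H/E_a;x,y)-T(G/E_a;x,y). \]
Contracting $E_a$ in $G$ turns the parallel length-$b$ ear into a cycle block $C_b$ and collapses the remainder into a theta graph, so $G/E_a=C_b\cdot\theta(c,d,e+f)$; contracting $E_a$ in $H$ identifies $u$ with $x$, making the length-$1$ and length-$b$ ears a parallel pair and the length-$(e+f)$ and length-$(d-1)$ ears a parallel pair, so $H/E_a=\Delta(1,b,e+f,d-1,c)$.

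It thus remains to prove $C_b\cdot\theta(c,d,e+f)\prec\Delta(1,b,e+f,d-1,c)$. But this is exactly the inequality established inside the proof of Theorem~\ref{del} — there, $C_b\cdot\theta(c,d,e)\prec\Delta(b,1,e,d-1,c)$ — with the single substitution $e\mapsto e+f$; the order of the two parallel lengths in a $\Delta$ symbol is irrelevant, and the present hypothesis $d=\max\{a,b,c,d\}$ plays precisely the role of the hypothesis $d=\max$ there. Concretely, that argument applies Theorem~\ref{e1} to the length-$c$ ear, uses Theorem~\ref{cycv} to get $C_b\cdot C_{d+e+f}\prec C_{b+1}\cdot C_{d+e+f-1}$ (legitimate since $e+f\ge 2$ and $d\ge b$ force $d+e+f-1>b$), and applies Theorem~\ref{comb} twice to get $C_b\cdot C_d\cdot C_{e+f}\prec\theta(b,1,e+f,d-1)$; reassembling yields the strict inequality, hence $G\prec H$. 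I expect the only real work to be the bookkeeping identifying $G-E_a=H-E_a$, $G/E_a=C_b\cdot\theta(c,d,e+f)$, and $H/E_a=\Delta(1,b,e+f,d-1,c)$; once the reduction is in place, the rest is a verbatim rerun of the delta-graph case.
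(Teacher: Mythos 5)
Your argument is correct: the incidence bookkeeping checks out ($G-E_a=H-E_a=\theta(c,d,b+e+f)$, $G/E_a=C_b\cdot\theta(c,d,e+f)$, $H/E_a=\Delta(1,b,e+f,d-1,c)$), the well-definedness observation $d\ge 2$ is right, and the inner inequality you need is indeed the one established inside the proof of Theorem~\ref{del} with $e$ replaced by $e+f$ (with your correct remark that $e+f\ge 2$ and $d\ge b$ keep the Theorem~\ref{cycv} step strict). However, the paper takes a different and considerably shorter route: rather than decomposing $G$ and $H$ head-to-head on the $a$-ear, it decomposes $G$ and the delta graph $D=\Delta(a,b,c,d,e+f)$ on the $c$-ear, observes that $G-E_c=D-E_c$ \emph{and} $G/E_c=\theta(a,b,e+f)\cdot C_d=D/E_c$, concludes $T(G;x,y)=T(D;x,y)$, and then invokes Theorem~\ref{del} as a black box. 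What the paper's route buys is a one-line proof plus a reusable structural fact (every cylinder graph is $T$-equivalent to a delta graph, so cylinder graphs never need separate treatment); what your route buys is independence from the statement of Theorem~\ref{del} at the cost of rerunning its internal argument. Both are valid; yours is just more labor for the same conclusion.
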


\begin{proof}
Letting $E_c$ denote the ear of length $c$ in both $G$ and in the delta graph $D=\Delta(a,b,c,d,e+f)$, and note that $G-E_c = D- E_c$ and that $G / E_c = \theta(a,b,e+f) \cdot C_d = D / E_c$. Thus $T(G;x,y) = T(D;x,y)$, and the result now follows from the previous theorem.
\end{proof}

With regard to maximum graphs in this class then it is only a question of which box graph is maximum, or if there might exist multiple maximal box graphs.  The following theorems, then, which describe how adjacent ears may be ``evened out'' in length, will help settle the maximum question.

\begin{thm} \label{boxev}
Let $G=B(a,b,c,d,e,f)$ be a box graph such that $b + f - 1 > a + e$.  Let $H = B(a+1,b,c,d,e,f-1)$.  Then $G \prec H$.
\end{thm}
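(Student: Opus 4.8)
The plan is to follow the pattern of the proof of Theorem~\ref{del}: peel an ear common to $G$ and $H$ via Theorem~\ref{e1}, and then show that each resulting summand is a non-negative combination of $\prec$-relations already established. Recall that a box graph is a $K_4$ with subdivided edges; writing $B(a,b,c,d,e,f)$ with vertices $u,v,x,y$ as in the definition, it is the $4$-cycle $u\overset{a}{-}v\overset{e}{-}x\overset{b}{-}y\overset{f}{-}u$ together with the two "diagonals" $c=(u,x)$ and $d=(v,y)$. The move $G\mapsto H$ lengthens the ear $(u,v)$ by one and shortens the ear $(u,y)$ by one; both ears are incident to $u$, and all of $c$, $d$, $b$, $e$, $a+f$, $a+e+?$ behave predictably under it.

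First I would peel the ear $E_d$ joining $v$ and $y$, which is unchanged from $G$ to $H$. Deleting $E_d$ leaves the $4$-cycle through $a,e,b,f$ plus the diagonal $c$, i.e.\ $G-E_d=\theta(c,a+e,b+f)$, and since the move only shifts one unit from the $f$-ear to the $a$-ear, $H-E_d=\theta(c,a+e+1,b+f-1)$; contracting $E_d$ identifies $v$ with $y$ and produces delta graphs, $G/E_d=\Delta(a,f,b,e,c)$ and $H/E_d=\Delta(a+1,f-1,b,e,c)$. Theorem~\ref{e1} therefore gives
\[
T(H;x,y)-T(G;x,y)=(1+x+\cdots+x^{d-1})\bigl(T(\theta(c,a+e+1,b+f-1))-T(\theta(c,a+e,b+f))\bigr)+T(\Delta(a+1,f-1,b,e,c))-T(\Delta(a,f,b,e,c)).
\]
The first bracket is exactly the setting of Theorem~\ref{par} for the parallel ears $E_{a+e}$ and $E_{b+f}$ of $\theta(c,a+e,b+f)$: the hypothesis $b+f-1>a+e$ says precisely that these ears may be evened out, so the first summand equals $(x+y-xy)$ times a non-negative, non-zero polynomial.

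For the second bracket I would peel once more, this time the single ear $E_c$ of the two delta graphs. Since the move preserves $a+f$ and $b+e$, one has $\Delta(a,f,b,e,c)-E_c=C_{a+f}\cdot C_{b+e}=\Delta(a+1,f-1,b,e,c)-E_c$, so the deletion part cancels and Theorem~\ref{e1} reduces the second bracket to $T(\theta(a+1,f-1,b,e))-T(\theta(a,f,b,e))$ — a comparison of generalized theta graphs whose ear-length multisets differ only in that $\{a,f\}$ is replaced by $\{a+1,f-1\}$. This too is governed by Theorem~\ref{par}: once the ordering information forced by the hypotheses is taken into account, passing from $\{a,f\}$ to $\{a+1,f-1\}$ is an evening-out move, so this bracket is likewise $(x+y-xy)$ times a non-negative polynomial. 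Summing the two contributions yields $T(H;x,y)-T(G;x,y)=(x+y-xy)P(x,y)$ with $P$ non-negative and non-zero, i.e.\ $G\prec H$.

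I expect the main obstacle to be exactly that last step: confirming that the $4$-ear theta comparison $\theta(a,f,b,e)\prec\theta(a+1,f-1,b,e)$ is a genuine instance of Theorem~\ref{par} and not its reverse. Unlike the first summand, where the inequality $b+f-1>a+e$ is used transparently to even out $E_{a+e}$ and $E_{b+f}$, here one must use the relative sizes of the ear lengths $a$ and $f$, so that shortening the $f$-ear while lengthening the $a$-ear brings the two closer together in the majorization order rather than spreading them apart. That is the delicate point; everything else is the same Theorem~\ref{e1}-bookkeeping carried out in the proof of Theorem~\ref{del} and the preceding results.
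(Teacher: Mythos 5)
Your decomposition is sound and is, in substance, the paper's own argument run in the opposite order: the paper peels $E_c$ first (so that the deletion terms cancel, since $G-E_c=H-E_c=\theta(d,b+e,a+f)$) and then peels $E_d$ from the resulting delta graphs, while you peel $E_d$ first and then $E_c$. Either way $T(H;x,y)-T(G;x,y)$ splits into a piece controlled by the hypothesis $b+f-1>a+e$ (Theorem~\ref{par} applied to $\theta(c,a+e,b+f)$ in your version, Theorem~\ref{cycv} applied to $C_{a+e}\cdot C_{b+f}$ in the paper's) plus the residual four-ear comparison of $\theta(a,b,e,f)$ with $\theta(a+1,b,e,f-1)$. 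Your intermediate identifications --- $G-E_d=\theta(c,a+e,b+f)$, $G/E_d=\Delta(a,f,b,e,c)$, and the cancellation $\Delta(a,f,b,e,c)-E_c=C_{a+f}\cdot C_{b+e}=\Delta(a+1,f-1,b,e,c)-E_c$ --- all check out.

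The gap is exactly the one you flag at the end, and it is a genuine gap, not a formality: Theorem~\ref{par} yields $\theta(a,b,e,f)\prec\theta(a+1,b,e,f-1)$ only when $f-1>a$, and that inequality does \emph{not} follow from $b+f-1>a+e$. Take $(a,b,e,f)=(3,5,1,2)$: then $b+f-1=6>4=a+e$, but $f-1=1<3=a$, and the move $\{a,f\}=\{3,2\}\mapsto\{4,1\}$ spreads the pair apart, so Theorem~\ref{par} gives the \emph{reverse} relation $\theta(4,5,1,1)\prec\theta(3,5,1,2)$. In that case your second summand is $-(x+y-xy)$ times a nonzero polynomial with non-negative coefficients, and to conclude you would have to show it is absorbed by the first summand --- which neither your argument nor this style of term-by-term argument does. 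So the proposal is incomplete precisely where you suspected, and "taking the ordering information forced by the hypotheses into account" cannot rescue it, because the hypotheses force nothing about $a$ versus $f$. For what it is worth, the paper's proof reduces to the identical obligation and also dispatches it by citing Theorem~\ref{par} "since $b+f-1>a+e$," so you have faithfully reconstructed the published argument, weakest link included; a complete treatment needs either the additional hypothesis $f-1>a$ (or a relabelling that guarantees it where the theorem is invoked) or a supplementary move in the spirit of Theorem~\ref{boxev2} to handle the case $f-1\le a$.
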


\begin{proof}
Note that if $b+f-1 > a+e$ then we may assume that $f>1$ and thus $H$ is well-defined.  Now, letting $E_c$ denote the ear of length $c$ in both $G$ and $H$, by construction we have $G - E_c = H-E_c = \theta(d,b+e,a+f)$.  
Hence, applying Theorem \ref{e1} to the ears $E_c$ we have
\begin{align*}
T(H;x,y) - T(G;x,y) &= T(H/E_c;x,y) - T(G/E_c;x,y)\\
&= T(\Delta(a+1,e,b,f-1,d);x,y) - T(\Delta(a,e,b,f,d);x,y)
\end{align*}
 Hence it suffices to show that $\Delta(a,e,b,f,d) \prec \Delta(a+1,e,b,f-1,d)$.  Now, letting $E_d$ denote the ear of length $d$ in both of these graphs, applying Theorem \ref{e1} the above is equal to
\begin{align*}
(1&+x +\dots + x^{d-1})T(\Delta(a+1,e,b,f-1,d)-E_d;x,y) + T(\Delta(a+1,e,b,f-1,d)/E_d;x,y) \\
&- (1+x+\dots+x^{d-1})T(\Delta(a,e,b,f,d)-E_d;x,y) - T(\Delta(a,e,b,f,d)/E_d;x,y)\\
&= (1+x+\dots +x^{d-1})T(C_{a+e+1}\cdot C_{b+f-1};x,y) + T(\theta(a+1,e,b,f-1);x,y)\\
& \qquad -(1+x+\dots +x^{d-1})T(C_{a+e} \cdot C_{b+f};x,y) - T(\theta(a,e,b,f);x,y)
\end{align*}
Hence it suffices to show that $C_{a+e}\cdot C_{b+f} \prec C_{a+e+1}\cdot C_{b+f-1}$ and $\theta(a,b,e,f) \prec \theta(a+1,b,e,f-1)$.  But since $b+f-1 > a+e$ both these statements follow from previous results, the first by Theorem \ref{cycv}
and the second from Theorem \ref{par}, completing the proof.
\end{proof}

\begin{thm} \label{boxev2}
Let $G=B(a,b,c,d,e,f)$ be a box graph such that $b-1 > e$ and $a > f$.  Let $H = B(a-1,b-1,c,d,e+1,f+1)$.  Then $G \prec H$.
\end{thm}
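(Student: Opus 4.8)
The plan is to peel the box graph $G$ (and correspondingly $H$) down to a generalized theta graph by two rounds of the ear deletion/contraction identity (Theorem~\ref{e1}), and then to settle the resulting theta-graph comparison with two applications of Theorem~\ref{par}. This mirrors the strategy of Theorems~\ref{del} and \ref{boxev}, the only new ingredient being a careful choice of which ears to eliminate.

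First I would apply Theorem~\ref{e1} to the ear $E_c$ in both $G$ and $H$; this is legitimate since box graphs are $2$-connected, so every edge of $E_c$ is a non-bridge. Writing $B(a,b,c,d,e,f)$ on the vertex quadruple $(u,v,x,y)$ with ears $a,b,c,d,e,f$ on the pairs $(u,v),(x,y),(u,x),(v,y),(v,x),(u,y)$, deleting $E_c$ and suppressing the two degree-$2$ vertices $u,x$ that remain yields $G-E_c=\theta(d,a+f,b+e)$, which is identical to $H-E_c$ because $(a-1)+(f+1)=a+f$ and $(b-1)+(e+1)=b+e$; contracting $E_c$ identifies $u$ and $x$ into the apex of a delta graph, giving $G/E_c=\Delta(a,e,b,f,d)$ and $H/E_c=\Delta(a-1,e+1,b-1,f+1,d)$. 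Since the deletion branches cancel, Theorem~\ref{e1} gives
\[ T(H;x,y)-T(G;x,y)=T(\Delta(a-1,e+1,b-1,f+1,d);x,y)-T(\Delta(a,e,b,f,d);x,y). \]
Now I would repeat the trick on the ear $E_d$ (the ``connecting'' ear between the two non-apex vertices) inside these two delta graphs, which are again $2$-connected: deleting $E_d$ leaves $C_{a+e}\cdot C_{b+f}$ in both cases, while contracting it merges all four remaining parallel ears through the apex, producing $\theta(a,e,b,f)$ and $\theta(a-1,e+1,b-1,f+1)$ respectively. Hence the deletion branches cancel once more and
\[ T(H;x,y)-T(G;x,y)=T(\theta(a-1,e+1,b-1,f+1);x,y)-T(\theta(a,e,b,f);x,y), \]
so it suffices to prove $\theta(a,e,b,f)\prec\theta(a-1,e+1,b-1,f+1)$.

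In a generalized theta graph all four paths share the same two endpoints, so any two of them form a pair of parallel ears and Theorem~\ref{par} applies. Since $a>f$ we have $a\ge f+1$: when $a-1>f$, Theorem~\ref{par} applied to the ears $E_a,E_f$ gives $\theta(a,e,b,f)\prec\theta(a-1,e,b,f+1)$, and in the boundary case $a=f+1$ these two theta graphs have the same multiset of path lengths, hence are isomorphic and $T$-equivalent; in either case $\theta(a,e,b,f)\preccurlyeq\theta(a-1,e,b,f+1)$. Then the hypothesis $b-1>e$ lets me apply Theorem~\ref{par} to the ears $E_b,E_e$ of $\theta(a-1,e,b,f+1)$, yielding $\theta(a-1,e,b,f+1)\prec\theta(a-1,e+1,b-1,f+1)$. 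Composing a $\preccurlyeq$ with a $\prec$ (the difference of Tutte polynomials is $(x+y-xy)$ times a sum of a nonnegative polynomial and a nonzero nonnegative polynomial, hence nonzero) gives $\theta(a,e,b,f)\prec\theta(a-1,e+1,b-1,f+1)$, and therefore $G\prec H$.

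The one genuinely delicate point is the bookkeeping in the second paragraph: verifying exactly which vertex pairs of the box (and of the delta) get identified when $E_c$ (resp. $E_d$) is contracted and which degree-$2$ vertices get suppressed when it is deleted, so that the deletion branches really do cancel and the contractions land on the claimed delta and theta graphs. Once $E_c$ and then $E_d$ are chosen as the ears to eliminate, the rest is an immediate appeal to Theorems~\ref{e1} and \ref{par}. It is also worth recording that $H$ is a legitimate box graph: from $b-1>e\ge 1$ and $a>f\ge 1$ we get $b\ge 3$ and $a\ge 2$, so $a-1,b-1\ge 1$.
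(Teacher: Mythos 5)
Your proposal is correct and follows essentially the same route as the paper: reduce via Theorem~\ref{e1} on $E_c$ (deletion branches cancel since $G-E_c=H-E_c$) to a comparison of delta graphs, then via $E_d$ to a comparison of $\theta(a,e,b,f)$ with $\theta(a-1,e+1,b-1,f+1)$, settled by Theorem~\ref{par}. Your last step is in fact slightly more careful than the paper's, which cites Theorem~\ref{par} once where two applications (plus the boundary case $a=f+1$, where the first step gives only $T$-equivalence) are really needed.
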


\begin{proof}
Note that since $b-1 > e$ and $a>f$ we may assume that both $a,b>1$ and thus $H$ is well-defined.  As in the previous proof, by construction we have $G - E_c = H-E_c = \theta(d,b+e,a+f)$, and hence it suffices to show that $\Delta(a,e,b,f,d) \prec \Delta(a+1,e+1,b-1,f+1,d)$.  In this case, however, we also have that $\Delta(a,e,b,f,d) - E_d  = \Delta(a-1,e+1,b-1,f+1,d) - E_d= C_{a+e} \cdot C_{b+f}$.  Hence after applying Theorem \ref{e1} we obtain
\begin{align*}
T(H;x,y) - T(G;x,y) &=  T(\Delta(a-1,e+1,b-1,f+1,d)/E_d;x,y) - T(\Delta(a,e,b,f,d)/E_d;x,y) \\
&= T(\theta(a-1,e+1,b-1,f+1);x,y) - T(\theta(a,e,b,f);x,y) 
\end{align*}
Hence it suffices to show that $\theta(a,e,b,f) \prec \theta(a-1,e+1,b-1,f+1)$.  But since $b-1 > e$ and $a > f$ this follows from Theorem \ref{par}, completing the proof.
\end{proof}

We can now identify the unique maximum graph in the graph class $\mathcal{G}_{n,n+2}$.

\begin{thm} \label{plus2}
The unique Tutte-maximum graph in the $(n,n+2)$ Tutte polynomial poset is the box graph $B(a,b,c,d,e,f)$ with ear lengths as evenly distributed as possible and, furthermore, with as many of the pairs $(E_a,E_b)$, $(E_c,E_d)$, $(E_e,E_f)$, equal in length as possible. 
\end{thm}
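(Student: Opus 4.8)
The plan is to reduce the theorem to a statement about box graphs and then to ``balance'' those. As noted just before the three comparison theorems, Theorem~\ref{comb} (together with Corollary~\ref{bridgeelim}) shows that any maximal element of the poset $(\mathcal{G}_{n,n+2},\preccurlyeq)$ must be $2$-connected, and the three theorems preceding Theorem~\ref{boxev} show that every generalized theta graph, delta graph, and cylinder graph in $\mathcal{G}_{n,n+2}$ is strictly dominated by some box graph. Hence any Tutte-maximum graph, if it exists, is a box graph $B(a,b,c,d,e,f)$ with $a+b+c+d+e+f=n+2$ (this is a simple $2$-connected graph, since $n\ge 4$ forces every ear length to be at least $1$), and it suffices to prove that the particular box graph $B^{*}$ described in the statement satisfies $B\preccurlyeq B^{*}$ for every box graph $B$, with strict inequality unless $B$ is $T$-equivalent to $B^{*}$.

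To establish this I would show that every box graph $B$ not $T$-equivalent to $B^{*}$ admits an improving move, namely an application of Theorem~\ref{boxev} or Theorem~\ref{boxev2}, possibly after first relabelling the four branch vertices — which is legitimate because the Tutte polynomial is a graph invariant, so permuting the roles of the six parameters $a,\dots,f$ by an automorphism of the underlying $K_{4}$ produces a $T$-equivalent graph. Each such move keeps us within box graphs, and since the poset is finite, iterating improving moves from an arbitrary box graph yields an increasing chain of box graphs that must terminate at a box graph admitting no improving move; so once the case analysis below is complete it follows that every box graph is $\preccurlyeq B^{*}$, strictly so unless already $T$-equivalent to $B^{*}$. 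Uniqueness is then immediate: any other graph dominating all of $\mathcal{G}_{n,n+2}$ would be both $\succcurlyeq B^{*}$ and $\preccurlyeq B^{*}$, hence $T$-equivalent to it; I would also check that all parametrizations meeting the description of $B^{*}$ lie in a single orbit under the order-$24$ action of $S_{4}$ on the edges of $K_{4}$, so that ``the Tutte-maximum graph'' is well defined.

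For the case analysis I would organize everything around the six edges of $K_{4}$ and its three perfect matchings $\{E_{a},E_{b}\}$, $\{E_{c},E_{d}\}$, $\{E_{e},E_{f}\}$, which are exactly the three pairs in the statement; two ears share a branch vertex precisely when they lie in different matchings. For two ears in different matchings, a suitably relabelled Theorem~\ref{boxev} transfers one unit of length from the longer half of a $4$-cycle through them to the shorter half whenever those half-lengths differ by at least two, while Theorem~\ref{boxev2} transfers two units between two matchings when those matchings are sufficiently unbalanced. In a first phase I would show that if the ear lengths are not as evenly distributed as possible (some two differ by at least two), then one of these moves applies, so iterating reaches a ``flat'' configuration in which every ear length equals $\lfloor (n+2)/6\rfloor$ or $\lceil (n+2)/6\rceil$. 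In a second phase, restricted to flat configurations, I would show that whenever the number of equal pairs is strictly below the maximum attainable for the residue $r=(n+2)\bmod 6$ — a short count shows this maximum is $3$ when $r$ is even and $2$ when $r$ is odd — there is still a move of Theorem~\ref{boxev} or Theorem~\ref{boxev2} type, now one that moves a long ear into a pair already containing a long ear (and a short ear out) without disturbing flatness, increasing the number of equal pairs. Together the two phases show the only box graphs admitting no improving move are exactly those meeting the description of $B^{*}$.

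The main obstacle will be the exhaustiveness of these two phases. The hypotheses of Theorems~\ref{boxev} and~\ref{boxev2} are somewhat rigid (the inequality $b+f-1>a+e$ for the first, and the conjunction $b-1>e$, $a>f$ for the second), so one must check that in every non-optimal configuration at least one relabelled instance has its hypotheses met. The delicate cases are those in which a length imbalance sits between two \emph{opposite} (non-adjacent) ears of $K_{4}$, so that Theorem~\ref{boxev} does not directly apply since it only moves length between ears sharing a vertex; there one must either route the imbalance through an intermediate ear in a common $4$-cycle or invoke Theorem~\ref{boxev2} with care. The residue casework must also be carried out explicitly, especially for odd $r$, where the optimal configuration necessarily contains one ``off-by-one'' pair and one must verify both that this configuration is reached and that no further improvement past it exists.
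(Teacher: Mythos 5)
Your overall strategy is exactly the paper's: reduce to box graphs via Theorem~\ref{comb}, Corollary~\ref{bridgeelim}, and the three comparison theorems, then show every suboptimal box graph admits an improving application of Theorem~\ref{boxev} or~\ref{boxev2} (the paper phrases this as a contradiction against a maximal element rather than as an iteration, but that is cosmetic). The problem is that your write-up stops where the paper's proof actually begins: the entire content of the argument is the case analysis you defer as ``the main obstacle,'' namely the verification that every non-optimal configuration satisfies the hypotheses of some relabelled instance of Theorem~\ref{boxev} or~\ref{boxev2}. As written, nothing in your proposal certifies that this check closes, so the proof is not complete.

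For what it is worth, the check does close and is short, but your instinct about where the danger lies is inverted. The case you flag as delicate --- two \emph{opposite} ears $E_e,E_f$ with $f-1>e$ --- is the easy one, and your own ``route through a $4$-cycle'' remark is the whole fix: in the $4$-cycle $E_a,E_e,E_b,E_f$, name the longer of $E_a,E_b$ as $E_b$, so $b+f-1\ge a+f-1>a+e$ and Theorem~\ref{boxev} applies directly. The genuinely delicate case is two \emph{adjacent} ears $E_b,E_e$ with $b-1>e$: if $f\ge a$ then again $b+f-1>a+e$ and Theorem~\ref{boxev} applies, but if $a>f$ it can happen that \emph{every} $4$-cycle of the box graph has its two halves differing by at most one, so that no relabelled instance of Theorem~\ref{boxev} is available at all --- e.g.\ $B(3,3,2,2,1,1)$, where all six half-sums are balanced yet the graph is far from $B^*=B(2,2,2,2,2,2)$. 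This is precisely the configuration that forces the hypotheses $b-1>e$, $a>f$ of Theorem~\ref{boxev2} (and, per the acknowledgements, is the gap that theorem was added to repair); your plan treats Theorem~\ref{boxev2} as an auxiliary tool rather than as the indispensable step. Your phase two is fine and matches the paper: if the lengths are flat but two of the three matching-pairs are unequal, relabel so the two long ears of those pairs are adjacent in their common $4$-cycle, giving $b+f-1=2t+1>2t=a+e$, and Theorem~\ref{boxev} increases the number of equal pairs by two while preserving flatness. Writing out these three cases explicitly (opposite ears; adjacent ears split by the sign of $a-f$; flat pairing) is all that is missing, but it is the proof.
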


To clarify the statement on pairs of ears, another way to say this is that the maximum graphs for graph classes $\mathcal{G}_{4,6}$, $\mathcal{G}_{5,7}$, $\mathcal{G}_{6,8}$, etc., are $B(1,1,1,1,1,1)$, $B(2,1,1,1,1,1)$, $B(2,2,1,1,1,1)$, $\dots$, $B(2,2,2,2,2,2)$, $B(3,2,2,2,2,2)$, $B(3,3,2,2,2,2)$, and so on.  In the proof below we will call these graphs $B_n^*$.

\begin{proof}
Assume to the contrary that there is a graph $G \in \mathcal{G}_{n,n+2}$ such that $B_n^* \preccurlyeq G$, and take $G$ to be such a graph that is maximal in the poset.  By the previous results of this section we may assume that $G$ is a box graph $B(a,b,c,d,e,f)$.  Now examine two independent ears of $G$, that is, ears that have no common vertices, such as $E_e, E_f$.  In independent ears $E_e,E_f$, if $f-1 > e$, then by taking $E_b$ to be the longer ear of $E_a,E_b$ in the cycle formed by $E_a,E_b,E_c,E_f$ we have $b+f-1 > a+e$, which by Theorem \ref{boxev} means that $G$ is not maximal, a contradiction.  Hence every two independent ears of $G$ must differ in lengths by at most one.

Next examine two adjacent ears in $G$, that is, ears that share one common endvertex, such as $E_b,E_e$.  Assume in adjacent ears $E_b,E_e$ we have $b-1 > e$.  Now either $f \ge a$, in which case  we have $b+f-1>a+e$ and $G$ is not maximal by Theorem \ref{boxev}, or else  $a >f$, in which case $G$ is not maximal by Theorem \ref{boxev2}.  Since there are no parallel ears in $G$, we see in fact that every pair of ears of $G$ must differ in lengths by at most one, and thus the ear lengths are as equal as possible.

Finally, consider the case where the ear lengths of $G$ are as equal as possible, but fewer than possible of the pairs $(E_a,E_b)$, $(E_c,E_d)$, $(E_e,E_f)$, are equal in length.  Then there exists two pairs, say $(E_a,E_b)$ and $(E_e,E_f)$, that are unequal in length, which implies there exists the cycle $E_a,E_b,E_e,E_f$, such that $a=e=t$ and $b=f=t+1$ for some $t \ge 1$.  But this implies $b+f-1>a+e$, again a contradiction.  Thus as many of the pairs of ears must be equal as possible, and in fact $G = B_n^*$.  
\end{proof}

\section{Conclusion}

The cycles $C_n$ in $G_{n,n}$, the theta graphs with path lengths as even distributed as possible in $G_{n,n+1}$, and the box graphs of Theorem \ref{plus2} are the UOR graphs  for their classes as identified in \cite{bls91}.  Thus Theorems \ref{gnn}, \ref{thetamax}, and \ref{plus2} generalize \cite{bls91} by showing that those graphs are maximum graphs in their respective classes not just for all-terminal reliability, but for a wide variety of graph parameters associated with the Tutte polynomial, including the number of spanning trees, spanning forests, and spanning connected subgraphs;  the number of various types of orientations of $G$, including acyclic orientations, totally cyclic orientation, and acyclic orientations with a single source; and various types of partial orientations and fourientations of $G$.  In addition, the cycles, theta graphs and box graphs described have the maximum coefficients (in absolute value) in their classes for the chromatic polynomial and flow polynomial; generating functions for spanning forests of $i$ components, and spanning subgraphs of $i$ edges; generating functions relating to orientations, partial orientations, and fourientations of a graph; and the generating function for the number of critical configurations of level $i$ of the Abelian sandpile model \cite{m97}.  

While the graph classes studied here all have unique Tutte-maximum graphs, our computation of small Tutte posets shows that this is not always the case.  The smallest such class, which is also the smallest class with no UOR graph \cite{mcpp91}, is $(6,11)$, which has two maximal graphs in its Tutte poset.  (The two graphs have $2P_3$ and $P_4 \cup P_2$ as their complements, respectively.)  Thus the $(n,n+5)$ Tutte polynomial posets do not always have maximum graphs, at least not for all $n$.  This is also the case for the $(n,n+4)$ Tutte polynomial posets, as illustrated by the $(7,11)$ Tutte polynomial poset described in the Introduction.  However, in the class $\mathcal{G}_{n,n+3}$ we conjecture that there is a unique maximum graph.  Just as the unique maximum for $\mathcal{G}_{n,n+2}$ was a particular ``equally distributed'' subdivision of $K_4$, a particular ``equally distributed'' subdivision of $K_{3,3}$ was shown to be maximum for all-terminal reliability by Wang \cite{w94}.  Some of the graphs in \cite{w94} for particular $n$ were recently shown to be incorrect in \cite{ls24}, which provided the correct graphs for those cases.
We conjecture that the graphs given in \cite{w94} and \cite{ls24} which maximize all-terminal reliability are more generally maximum graphs in the $(n,n+3)$ Tutte polynomial poset.  If true then---like the maximum graphs found here---these graphs are in fact maximum in their class for far more parameters than just all-terminal reliability.  We believe some of the tools in this paper may prove useful in determining whether our conjecture is correct or not.  

Finally, as far as larger, more edge-dense graph classes, other Tutte polynomial parameters may be of some guide here.  Much work, for example, has been done on determining which graphs are \emph{t-optimal}, in other words, maximize the number of spanning trees.  We mention one fact in particular.  In \cite{c81} it was shown that regular complete multipartite graphs maximize the number of spanning trees in the classes $\gnm$ where they exist.  We conjecture that, in these graph classes, the regular complete multipartite graphs are in fact Tutte-maximum.

\section*{Acknowledgements}

The authors would like to acknowledge Lorents Landgren and Jeffrey Stief for their helpful comments and for providing the preprint \cite{ls24}, and also Pablo Romero, who discovered a small gap in the original proof of Theorem \ref{plus2}, and provided the argument of Theorem \ref{boxev2} that fixed that gap.  Many thanks to all.

\begin{figure}[h]  
\begin{center}
\resizebox{0.6\linewidth}{!}{
\begin{tikzpicture}

\tikzset{mynode/.style={draw,circle,fill=black,inner sep=10pt}
    }

\tikzset{
   G5/.pic={
 \node [mynode] (A2) at (180:6cm) {};
	\node [mynode] (B2) at (90:5cm) {};
 	\node [mynode] (C2) at (0:6cm) {};
	\node [mynode] (D2) at (-90:5cm) {};
	\node [mynode] (E2) at (0:2cm) {};
	\node [mynode] (F2) at (180:2cm) {};
	\draw[line width=2mm] (A2) -- (B2) -- (C2) -- (D2) -- (A2);
	\draw[line width=2mm] (A2) -- (F2) -- (E2) -- (C2);
	}
}

\tikzset{
   G7/.pic={
 \node [mynode] (A7) at (120:5cm) {};
	\node [mynode] (B7) at (60:5cm) {};
 	\node [mynode] (C7) at (0:5cm) {};
	\node [mynode] (D7) at (-60:5cm) {};
	\node [mynode] (E7) at (-120:5cm) {};
	\node [mynode] (F7) at (180:5cm) {};
	\draw[line width=2mm] (A7) -- (B7) -- (C7) -- (D7) -- (E7) -- (F7) -- (A7);
	\draw[line width=2mm] (F7) -- (C7);
	}
}

\tikzset{
   G6/.pic={\begin{scope}[rotate=90]
 \node [mynode] (A6) at (180:5cm) {};
	\node [mynode] (B6) at (120:5cm) {};
 	\node [mynode] (C6) at (60:5cm) {};
	\node [mynode] (D6) at (0:5cm) {};
	\node [mynode] (E6) at (-60:5cm) {};
	\node [mynode] (F6) at (-120:5cm) {};
	\draw[line width=2mm] (A6) -- (B6) -- (C6) -- (D6) -- (E6) -- (F6) -- (A6);
	\draw[line width=2mm] (B6) -- (F6);
	\end{scope}
		}
}

\tikzset{
   G8/.pic={
 \node [mynode] (A8) at (180:7cm) {};
	\node [mynode] (B8) at (135:5cm) {};
 	\node [mynode] (C8) at (0:0cm) {};
	\node [mynode] (D8) at (45:5cm) {};
	\node [mynode] (E8) at (-45:5cm) {};
	\node [mynode] (F8) at (-135:5cm) {};
	\draw[line width=2mm] (A8) -- (B8) -- (C8) -- (D8) -- (E8) -- (C8) --  (F8) -- (A8);
	}
}

\tikzset{
   G4/.pic={
 \node [mynode] (A4) at (54:5cm) {};
	\node [mynode] (B4) at (126:5cm) {};
 	\node [mynode] (C4) at (198:5cm) {};
	\node [mynode] (D4) at (270:5cm) {};
	\node [mynode] (E4) at (-18:5cm) {};
	\node [mynode] (F4) at (-9:9.8cm) {};
	\draw[line width=2mm] (A4) -- (B4) -- (C4) -- (D4) -- (E4) -- (A4);
	\draw[line width=2mm] (C4) -- (E4);
		\draw[line width=2mm] (F4) -- (E4);
	}
}

\tikzset{
   G2/.pic={\begin{scope}[rotate=90]
 \node [mynode] (A2) at (180:5cm) {};
	\node [mynode] (B2) at (90:5cm) {};
 	\node [mynode] (C2) at (0:5cm) {};
	\node [mynode] (D2) at (-90:5cm) {};
	\node [mynode] (E2) at (0:0cm) {};
	\node [mynode] (F2) at (-90:10cm) {};
	\draw[line width=2mm] (A2) -- (B2) -- (C2) -- (D2) -- (A2);
	\draw[line width=2mm] (B2) -- (E2) -- (D2) -- (F2);
	\end{scope}
	}
}

\tikzset{
   G3/.pic={
 \node [mynode] (A3) at (180:5cm) {};
	\node [mynode] (B3) at (0:0cm) {};
 	\node [mynode] (C3) at (0:5cm) {};
	\node [mynode] (D3) at (-60:5cm) {};
	\node [mynode] (E3) at (-120:5cm) {};
	\node [mynode] (F3) at (90:5cm) {};
	\draw[line width=2mm] (A3) -- (B3) -- (C3) -- (D3) -- (B3) -- (E3) -- (A3);
	\draw[line width=2mm] (B3) -- (F3);
	}
}

\tikzset{
   G1/.pic={
 \node [mynode] (A1) at (180:5cm) {};
	\node [mynode] (B1) at (90:5cm) {};
 	\node [mynode] (C1) at (0:5cm) {};
	\node [mynode] (D1) at (-90:5cm) {};
	\node [mynode] (E1) at (180:10cm) {};
	\node [mynode] (F1) at (0:10cm) {};
	\draw[line width=2mm] (A1) -- (B1) -- (C1) -- (D1) -- (A1);
	\draw[line width=2mm] (E1) -- (A1);
	\draw[line width=2mm] (C1) -- (F1);
	\draw[line width=2mm] (A1) -- (C1);
	}
}

\begin{scope}[scale=1.5]
\pic at (0,36) {G5};
\pic at (0,24) {G7};
\pic at (20,11) {G6};
\pic at (20,-1) {G8};
\pic at (0,-18) {G4};
\pic at (-20,6) {G2};
\pic at (0,-32) {G3};
\pic at (0,-46) {G1};
\end{scope}

\begin{scope}
\draw[line width=2mm] (0,48) -- (0,42);  
\draw[line width=2mm] (0,30) -- (-30,16);  
\draw[line width=2mm] (0,30) -- (30,24);  
\draw[line width=2mm] (30,9) -- (30,3);   
\draw[line width=2mm] (30,-8) -- (0,-20);  
\draw[line width=2mm] (-30,1) -- (0,-20);  
\draw[line width=2mm] (0,-34) -- (0,-40);  
\draw[line width=2mm] (0,-54) -- (0,-60);  
\end{scope}

\end{tikzpicture}
}
\end{center}
\caption{The smallest non-chain Tutte polynomial poset, the $(6,7)$ Tutte polynomial poset.  (If multiple graphs have the same Tutte polynomial only one graph is shown.)  The graph $\theta(2,2,2) \cdot K_2$ is incomparable with both the graphs $\theta(1,2,4)$ and $C_3 \cdot C_4$.  As one moves upward in the poset, a wide variety of graph parameters associated with the Tutte polynomial increase.  
} 
\label{poset67}
\end{figure}

\end{document}